\documentclass[preprint,11pt]{amsart}
\usepackage[margin=1in]{geometry}              
\usepackage{amsmath}               
\usepackage{amsfonts}              
\usepackage{amsthm}
\usepackage{enumerate}
\usepackage{esint}
\usepackage{accents}
\usepackage{enumitem}

\numberwithin{equation}{section}


\newtheorem{thm}{Theorem}[section]
\newtheorem{defn}[thm]{Definition}
\newtheorem{lem}[thm]{Lemma}

\newtheorem{cor}[thm]{Corollary}


\def\XXint#1#2#3{{\setbox0=\hbox{$#1{#2#3}{\int}$}
     \vcenter{\hbox{$#2#3$}}\kern-.5\wd0}}

\begin{document}

\nocite{*}

\title[Tug of war]{A game-theoretic approach to the parabolic normalized p-Laplacian obstacle problem  }


\author{Hamid EL Bahja}

\address{Hamid EL Bahja, The African Institute for Mathematical Sciences, Research and Innovation Centre, Rwanda}
\email{hamidsm88@gmail.com, helbahja@aims.ac.za}
\subjclass{5B65,47G20,31B05}

\keywords{ Tug of war, Parabolic Normalized p-Laplacian, Viscosity solutions, Obstacle problem.}

\maketitle

\begin{abstract}

This paper establishes a probabilistic representation for the solution of the parabolic obstacle problem associated with the normalized $p$-Laplacian. We introduce a zero-sum stochastic tug-of-war game with noise in a space-time cylinder, where one player has the option to stop the game at any time to collect a payoff given by an obstacle function. We prove that the value functions of this game exist, satisfy a dynamic programming principle, and converge uniformly to the unique viscosity solution of the continuous obstacle problem as the step size $\varepsilon$ tends to zero. \end{abstract}

\section{Introduction}

The theory of obstacle problems represents a cornerstone in the study of free boundary problems, with applications ranging from membrane elasticity to financial mathematics \cite{fri,chap,pha,ham1}. Obstacle problems, where the solution is constrained to remain above or under a prescribed function, have been extensively analyzed in both elliptic and parabolic cases \cite{Caffarelli1977, Kinderlehrer1980,ham2}. 

In parallel developments, probabilistic approaches to nonlinear PDEs have yielded deep insights through connections with stochastic games. A particularly fruitful line of research has established relationships between certain degenerate elliptic operators and tug-of-war games. The pioneering work of \cite{PeresSchramm2009} connected deterministic tug-of-war games to the infinity Laplacian, while \cite{PeresSheffield2008} introduced noise into these games to obtain the normalized $p$-Laplacian:
\[
\Delta_p^N u = |\nabla u|^{2-p} \text{div}(|\nabla u|^{p-2}\nabla u) = \Delta u + (p-2)\Delta_{\infty} u,
\]
where
\[
\Delta_\infty u = \frac{\langle D^2u \, \nabla u, \nabla u \rangle}{|\nabla u|^2}.
\]
See also the works \cite{man1,man2} and references therein for more results in the elliptic case. In the time-dependent setting, game-theoretic formulations and asymptotic mean-value characterizations for a class of nonlinear parabolic equations have been treated by several authors, see for instance~\cite{man3,parv,DelPezzoRossi2014,Han2022}. 
For obstacle problems in the elliptic case, the tug of war game with noise and optimal-stopping framework has been adapted to yield value-function approximations that converge to viscosity solutions of obstacle problems for $p$-Laplace-type operators~\cite{man4,lew}. However, despite these separate advancements, a significant gap remains. The parabolic obstacle problem for the normalized $p$-Laplacian has not been studied specially and systematically from a game-theoretic perspective.

In this paper we bridge that gap. Building directly on the elliptic obstacle framework of \cite{lew} and the parabolic game constructions in \cite{man3}, we introduce a parabolic tug-of-war with noise that incorporates an optimal stopping rule and prove that its value functions converge uniformly to the viscosity solution of the parabolic obstacle problem for the normalized $p$-Laplacian. Our approach combines martingale estimates adapted to stopping times, explicit near-boundary barriers, and viscosity techniques to identify the limit and establish uniqueness.

Our main contributions are threefold. First, we establish that our game has a value and that this value satisfies a dynamic programming principle (DPP), which serves as the discrete analogue of the PDE. Second, we prove uniform Hölder estimates for the value functions, independent of the discretization parameter $\varepsilon$, providing the compactness needed to pass to the continuum limit. Finally, we show that as $\varepsilon \to 0$, these value functions converge uniformly to the unique viscosity solution of the parabolic normalized $p$-Laplacian obstacle problem:
\begin{equation}
\begin{cases}
\min\left\{ (n+p)u_t - \left[(p-2)\Delta_\infty u + \Delta u\right],\ u-\psi \right\} = 0 & \text{in } \Omega_T, \\
u = F & \text{on } \partial_p \Omega_T, \\
u \geq \psi & \text{in } \Omega_T,
\end{cases}
\end{equation}
where $F$ is the boundary data and $\psi$ is the obstacle constraint. \medskip Throughout the paper we assume
\[
2\leq p<\infty.
\]
In particular, with
\[
\alpha=\frac{p-2}{p+n},\qquad \beta=\frac{n+2}{p+n},
\]
we have \(0<\alpha<1\), \(0<\beta<1\) and \(\alpha+\beta=1\); thus \(\alpha,\beta\) may be interpreted as the weights appearing in the discrete DPP and in the probabilistic game.
\medskip

The paper is organized as follows. In Section 2, we introduce the necessary preliminaries and formally define the parabolic tug-of-war game with an obstacle. Section 3 is devoted to the dynamic programming principle and the existence of the game value. Section 4 contains the convergence analysis, including uniform regularity estimates and the identification of the limit as the unique viscosity solution of the continuous obstacle problem.

\section{Preliminaries}
Throughout the paper, $\Omega\in \mathbb{R}^{N}$ is a bounded domain. For $T>0$, let $\Omega_{T}=\Omega\times(0,T)$ be a parabolic cylinder with the parabolic boundary
\begin{equation*}
    \Gamma_p=\{\partial\Omega\times[0,T]\}\cup\{\Omega\times \{0\}\}.
\end{equation*}
For our game, we also need the parabolic boundary strip of width $\varepsilon>0$
\begin{equation*}
    \Gamma_{p}^{\varepsilon}=\left(S_{\varepsilon}\times(-\frac{\varepsilon^{2}}{2},T]   \right)\cup \left(\Omega\times (-\frac{\varepsilon^{2}}{2},0]   \right),
\end{equation*}
where
\begin{equation*}
    S_{\varepsilon}=\{x\in \mathbb{R}^{N}\setminus\Omega,~dist(x,\partial\Omega)\leq \varepsilon\}
\end{equation*}
is the $\varepsilon$-boundary strip of $\Omega$.  

Also, let $F:\Gamma_{p}^{\varepsilon}\longrightarrow\mathbb{R}$ and $\psi:\mathbb{R}^{N+1}\longrightarrow\mathbb{R}$ be two Lipschitz continuous functions such that
\begin{align}
    |F(x,t_x)-F(y,t_y)|\leq C_1 \left( |x-y|+|t_x-t_y|^{\frac{1}{2}} \right),\\
    |\psi(x,t_x)-\psi(y,t_y)|\leq C_2 \left( |x-y|+|t_x-t_y|^{\frac{1}{2}} \right),
\end{align}
and 
\begin{equation}
    \psi\leq F~~~~~~~~~~~~\text{in}~~~~\Gamma^{\varepsilon}_p.
\end{equation}

Next, we define our parabolic obstacle tug of war game with noise. The game is a zero-sum stochastic game between player I and player II in $\Omega_T$. Fix $\varepsilon>0$ and let $\alpha=\frac{p-2}{p+n}$ and $\beta=\frac{n+2}{p+n}$. The game proceeds as follows: First, a token is placed at $(x_0,t_0)\in \Omega_T$. With probability $\alpha$, the players flip a fair coin, and the winner moves the token to $(x_1,t_1)\in B_{\varepsilon}(x_0)\times\{t_0-\frac{\varepsilon^{2}}{2}\}$ according to their strategy, where $B_{\varepsilon}(x_0)$ is an open ball centered at $x_0$ with radius $\varepsilon$. With probability $\beta$, the token moves uniformly at random to $(x_1,t_1)\in B_{\varepsilon}(x_0)\times\{t_0-\frac{\varepsilon^{2}}{2}\}$. From $(x_1,t_1)$, the game continues analogously. Once the token has reached $(x_{\tau^{0}_{N}}, t_{\tau^{0}_{N}})$ ,such that 
\begin{equation}
    \tau^{0}_{N}=\min\{N,\inf\{k,~x_k \in S_\varepsilon,~k=0,1,..,N     \}\},
\end{equation}
the game ends, and player I earns $F(x_{\tau^{0}_{N}},t_{\tau^{0}_{N}})$ while player II pays $F(x_{\tau^{0}_{N}},t_{\tau^{0}_{N}})$. In addition, at every point $(x_k,t_k)$, player I is allowed to end the game earlier and earn $\psi(x_k,t_k)$ while player II pays $\psi(x_k,t_k)$.

Let us denote $H=\Omega\cup S_{\varepsilon}$. A run game is a sequence
\begin{equation*}
    x=(x_0,x_1,...,x_N)\in H^{N+1},
\end{equation*}
where every $x_k$ except $x_0$ is a random variable, depending on the coin tosses, the strategies adopted by the players, and the stopping rule chosen by player I. Notice that $H^{N+1}$, as a product space, has a measurable structure. Then, the final payoff for player I is defined by
\begin{equation}
    G(x,t)=\begin{cases}
        F(x,t)~~~~~\text{if}~~(x,t)\in\Gamma^{\varepsilon}_p,\\
        \psi(x,t)~~~~~\text{if}~~(x,t)\in\Omega_T,
    \end{cases}
\end{equation}
induces a Borel measurable function on $H^{N+1}$.

A strategy $\sigma_{I}$ for player I (resp $\sigma_{II}$ for player II) is a function that, given the history $(x_0,...,x_k)$, selects the next move $\sigma_{I}(x_0,..,x_k)=x_{k+1}\in B_{\varepsilon}(x_{k})$ (similarly for $\sigma_{II}$). The fixed starting point $(x_0,t_0)$, the number of rounds $N$, the domain $\Omega$, the strategies $\sigma_{I}$ and $\sigma_{II}$, and any stopping time $\tau_{N}$ such that
\begin{equation}
\tau_{N}\leq \tau^{0}_{N},
\end{equation}
determine transition probabilities at each step. By the Ionescu--Tulcea theorem, these transition probabilities yield a unique probability measure $\mathbb{P}^{x_0,N}_{\tau_N,\sigma_{I},\sigma_{II}}$ in $H^{N+1}$. This measure is built by using the initial distribution $\delta_{x_{0}}(A)$ and the family of transition probabilities
\begin{equation}
    \pi_{k}(x_0,...,x_k)(A)=\begin{cases}
        \frac{\alpha}{2}\left\{\delta_{\sigma_{I}^{k}(x_0,..,x_k)}(A)+\delta_{\sigma_{II}^{k}(x_0,..,x_k)}(A)\right\}+\beta\frac{|A\cap B_{\varepsilon}(x_k)|}{|B_{\varepsilon}(x_k)|},&~~~~\text{if}~k<\tau_{N},\\
        \delta_{x_k}(A),&~~~~\text{elsewise}.
    \end{cases}
\end{equation}
By using the above setting, each game stops surely since $\tau_{N}\leq N$.

Finally, for any starting point $(x_0,t_0)$, the value of the game for player I with the maximum number of rounds N and (2.6) is given by
\begin{equation}
    u^{\varepsilon}_{I}(x_0,t_0)=\underset{\tau_N,\sigma_{I}}{\sup}\underset{\sigma_{II}}{\inf}~E^{x_0,N}_{\tau_N, \sigma_{I},\sigma_{II}}[G(x_{\tau_N},t_{\tau_N})],
\end{equation}
while the value of the game for player II is given by
\begin{equation}
    u^{\varepsilon}_{II}(x_0,t_0)=\underset{\sigma_{II}}{\inf}\underset{\tau_N,\sigma_{I}}{\sup}~E^{x_0,N}_{\tau_N, \sigma_{I},\sigma_{II}}[G(x_{\tau_N},t_{\tau_N})].
\end{equation}
\section{Dynamic Programming Principle: Existence and Uniqueness}
The Dynamic Programming Principle serves as the cornerstone for characterizing the value function in our parabolic obstacle tug-of-war game. In this section, we establish the existence and uniqueness of solutions to the DPP, which shares structural similarities with the Wald-Bellman equations for optimal stopping problems \cite{pes}.

\begin{thm}
    Let $\alpha\in [0,1]$ and $\beta=1-\alpha$. Let  $F:\Gamma_{p}^{\varepsilon}\longrightarrow\mathbb{R}$ and $\psi:\mathbb{R}^{N+1}\longrightarrow\mathbb{R}$ be two bounded, Borel function satisfying (2.1)-(2.3). Then, there exists a unique bounded Borel function $u^{\varepsilon}:\Omega_T\cup \Gamma_{p}^{\varepsilon}\longrightarrow\mathbb{R}$ satisfying the following DPP
    \begin{equation}\label{eq:DPP}
\begin{aligned}
u^{\varepsilon}(x,t)
&=\max\Big\{\psi(x,t),\;
\frac{\alpha}{2}\Big(\sup_{y\in B_{\varepsilon}(x)}u^{\varepsilon}\big(y,t-\tfrac{\varepsilon^{2}}{2}\big)
+\inf_{y\in B_{\varepsilon}(x)}u^{\varepsilon}\big(y,t-\tfrac{\varepsilon^{2}}{2}\big)\Big)\\
&\qquad\qquad\qquad\qquad\qquad\qquad
+\beta\;\fint_{B_{\varepsilon}(x)}u^{\varepsilon}\big(y,t-\tfrac{\varepsilon^{2}}{2}\big)\,dy\Big\},
\qquad (x,t)\in\Omega_T,\\[6pt]
u^{\varepsilon}(x,t)&=F(x,t),\qquad\qquad\qquad\qquad\qquad\qquad\qquad (x,t)\in\Gamma_{p}^{\varepsilon}.
\end{aligned}
\end{equation}
\end{thm}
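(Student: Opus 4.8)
The plan is to realize $u^{\varepsilon}$ as the fixed point of the operator $\mathcal{T}$ acting on bounded Borel functions on $\Omega_T\cup\Gamma_p^{\varepsilon}$ defined by the right-hand side of \eqref{eq:DPP}: on $\Gamma_p^{\varepsilon}$ set $(\mathcal{T}v)(x,t)=F(x,t)$, and on $\Omega_T$ set $(\mathcal{T}v)(x,t)$ equal to the max of $\psi(x,t)$ and the averaging expression built from $v(\cdot,t-\tfrac{\varepsilon^2}{2})$. First I would observe that the time variable only ever decreases in steps of $\tfrac{\varepsilon^2}{2}$, and that starting from any $(x_0,t_0)\in\Omega_T$ the token reaches a time level in $(-\tfrac{\varepsilon^2}{2},0]$ — hence the parabolic strip — after at most $N_0:=\lceil 2t_0/\varepsilon^2\rceil$ steps. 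This means the problem decouples across the finitely many relevant time slices $t_0, t_0-\tfrac{\varepsilon^2}{2}, t_0-\varepsilon^2,\dots$, so one can solve \eqref{eq:DPP} slice by slice by downward recursion: on slices lying in $\Gamma_p^{\varepsilon}$ the value is prescribed to be $F$, and each subsequent slice is determined explicitly by plugging the already-known next slice into the max-average formula. This simultaneously gives existence and uniqueness, provided each slice step is well defined.

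The key step is therefore to check that the slice operator preserves boundedness and produces a Borel function. Boundedness is immediate: since $\min\{F\}_{\inf}\wedge\inf\psi \le (\mathcal{T}v) \le \sup|F|\vee\sup|\psi|$ whenever $v$ satisfies the same bound, and both $F$ and $\psi$ are bounded by hypothesis, the bound $\|\mathcal{T}v\|_\infty\le \max\{\|F\|_\infty,\|\psi\|_\infty\}$ propagates through the finitely many slices. For measurability, the main point is that $(x,t)\mapsto \sup_{y\in B_\varepsilon(x)} v(y,t-\tfrac{\varepsilon^2}{2})$ and the corresponding infimum and average are Borel whenever $v$ is bounded Borel. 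For the sup and inf this follows because $B_\varepsilon(x)$ is open, so the sup over $B_\varepsilon(x)$ equals the sup over a countable dense set of the translates, exhibiting it as a countable supremum of Borel functions; for the average, $\fint_{B_\varepsilon(x)} v(y,t-\tfrac{\varepsilon^2}{2})\,dy = |B_\varepsilon|^{-1}\int_{B_\varepsilon(0)} v(x+z,t-\tfrac{\varepsilon^2}{2})\,dz$ is Borel in $(x,t)$ by Fubini/continuity of translation in $L^1_{loc}$. Then $\mathcal{T}v$ is a maximum of two Borel functions, hence Borel.

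Finally I would record that this fixed point is exactly the game value, i.e. $u^{\varepsilon}=u^{\varepsilon}_I=u^{\varepsilon}_{II}$, by a standard backward-induction / martingale argument: reading \eqref{eq:DPP} from the last slice upward shows that the $\mathcal{T}$-fixed point coincides with the value obtained by optimizing over strategies and the stopping time $\tau_N\le\tau_N^0$, the $\max$ with $\psi$ encoding player I's option to stop and the $\tfrac{\alpha}{2}(\sup+\inf)+\beta\,\fint$ term encoding one round of tug-of-war with noise; because only finitely many rounds can occur before the strip is hit, no further approximation is needed and the sup-inf and inf-sup values agree. The main obstacle is the measurability bookkeeping — ensuring that taking suprema and infima over the balls $B_\varepsilon(x)$, and then iterating, stays within the Borel class — together with verifying that the finiteness of the time horizon genuinely makes the slice recursion terminate; once these are in place, existence and uniqueness are a direct consequence of the explicit downward recursion rather than of a contraction argument.
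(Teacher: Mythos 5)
Your proposal follows essentially the same route as the paper: the paper defines the same operator $\mathcal{T}$, iterates it starting from $u_0=\psi$ in $\Omega_T$ and $u_0=F$ on $\Gamma_p^{\varepsilon}$, and proves by induction that the iterates stabilize at each point after finitely many steps, precisely because each application of $\mathcal{T}$ looks back one time level $\tfrac{\varepsilon^2}{2}$ and the time horizon is finite; uniqueness is obtained by the same induction on time levels that underlies your slice-by-slice recursion. So the core mechanism (a finite backward recursion in time rather than a contraction argument), the boundedness bound $\max\{\|F\|_\infty,\|\psi\|_\infty\}$, and the reduction of uniqueness to the prescribed data on $\Gamma_p^{\varepsilon}$ all coincide with the paper's proof. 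Your closing paragraph identifying the fixed point with the game values $u_I^{\varepsilon}=u_{II}^{\varepsilon}$ is not part of this statement (it is the paper's Theorem 3.3, proved there with $\eta$-optimal strategies and super/submartingale arguments) and can be dropped here.

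One justification in your measurability step is wrong as stated: for a merely bounded Borel function $v$, $\sup_{y\in B_\varepsilon(x)}v(y,\cdot)$ is in general \emph{not} equal to the supremum over a countable dense subset of the ball (take $v$ to be the indicator of a single point), so the countable-density argument fails. The conclusion can be rescued differently: for fixed $t$ one has $\{x:\ \sup_{y\in B_\varepsilon(x)}v(y,t-\tfrac{\varepsilon^2}{2})>a\}=\bigcup\{B_\varepsilon(y):\ v(y,t-\tfrac{\varepsilon^2}{2})>a\}$, which is open, so the supremum is lower semicontinuous in $x$ (similarly the infimum is upper semicontinuous and the average is continuous in $x$ by dominated convergence). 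The genuinely delicate point, joint Borel measurability in $(x,t)$ along the iteration, is asserted without proof in the paper as well; a complete treatment either exploits the Lipschitz regularity of $F$ and $\psi$ to control the iterates or cites the measurability discussions in the tug-of-war literature (Manfredi--Parviainen--Rossi, Lewicka--Manfredi). Apart from this repair, your argument is correct and matches the paper's.
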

\begin{proof}
We define the operator $T$ on bounded Borel function $v:\Omega_T\cup \Gamma_{p}^{\varepsilon}\longrightarrow\mathbb{R}$ by
\begin{equation}
\mathcal{T}v(x,t) =
\begin{cases}
\displaystyle
\max\!\Biggl\{
\psi(x,t),~
\frac{\alpha}{2}\!\left(
\underset{y\in B_{\varepsilon}(x)}{\sup} v\!\left(y,t-\tfrac{\varepsilon^{2}}{2}\right)
+ 
\underset{y\in B_{\varepsilon}(x)}{\inf} v\!\left(y,t-\tfrac{\varepsilon^{2}}{2}\right)
\right)
\\[6pt]
\displaystyle\hspace{3.5cm}
+~\beta \fint_{B_{\varepsilon}(x)} v\!\left(y,t-\tfrac{\varepsilon^{2}}{2}\right)\!dy
\Biggr\}, 
& (x,t)\in\Omega_T, \\[10pt]
F(x,t), & (x,t)\in\Gamma^{\varepsilon}_{p}.
\end{cases}
\end{equation}

We construct a sequence $\{u_{k}\}_{k=0}^{+\infty}$ iteratively
\begin{equation}\begin{cases}
    u_{0}(x,t)=\begin{cases}
        \psi(x,t)~~~&\text{for}~(x,t)\in \Omega_{T},\\
        F(x,t)~~~&\text{for}~(x,t)\in \Gamma_{p}^{\varepsilon},
    \end{cases}\\
    u_{k}(x,t)=\mathcal{T}u_{k}(x,t).
\end{cases}
\end{equation}
We prove by induction that for each $k$, the function $u_k$ is well-defined and Borel measurable, and that the sequence stabilizes after finitely many steps at each point, i.e., for every $k\geq 0$ and $i\geq k$
\begin{equation}
    u_{i}(y,s)=u_{k}(y,s)~~~\text{for all}~y\in \mathbb{R}^{N}~~\text{and}~~s\leq k\frac{\varepsilon^2}{2}.
\end{equation}
Indeed, for $k=0$, $u_0$ by construction is Borel measurable and satisfies (3.4). Now, we will assume that (3.4) holds for some $k>0$ where $u_k$ is Borel measurable, and we will prove that it also holds for $k+1$. Therefore, we fix $s\leq (k+1)\frac{\varepsilon^{2}}{2}$ and let $y\in \mathbb{R}^{N}$. Therefore, by using (3.4) for $k$ and since $s-\frac{\varepsilon^{2}}{2}\leq k\frac{\varepsilon^{2}}{2}$, we have
\begin{equation*}
    u_{k+1}(y,s)=\mathcal{T}u_{k}(y,s)=\mathcal{T}u_{i}(y,s)=u_{i+1}(y,s).
\end{equation*}
Also, since $u_k$ and $\psi$ are Borel measurable and $\sup,~\inf$ and the average preserve Borel measurability, then $u_{k+1}$ is also Borel measurable. Next, since the time $T$ is finite and from (3.4), the following limit
\begin{equation*}
    u^{\varepsilon}(x,t)=\underset{k\rightarrow +\infty}{\lim} u_{k}(x,t)
\end{equation*}
exists and it is reached in finitely many steps at each point. Thus
\begin{equation*}
    u^{\varepsilon}(x,t)=\mathcal{T}u(x,t).
\end{equation*}
For uniqueness, suppose that $v^{\varepsilon}$ is another solution. We show by induction on time that $v^{\varepsilon}=u_k$, for all $k$, hence $u^{\varepsilon}=v^{\varepsilon}$. Indeed, for the case $t\leq0$, both solutions equal $F$ on $\Gamma^{\varepsilon}_{p}$. Assume $v^{\varepsilon}=u_{k}$ for all time up to $t-\frac{\varepsilon^{2}}{2}$. Then 
\begin{equation*}
    v^{\varepsilon}(x,t)=\mathcal{T}v^{\varepsilon}(x,t)=\mathcal{T}u_{k}(x,t)=u_{k+1}(x,t).
\end{equation*}
Hence, by passing to the limit, we get the desired uniqueness.   
\end{proof}
\begin{cor}
Assume that $F_{1} \geq F_{2}$ on $\Gamma_{p}^{\varepsilon}$ and $\psi_{1} \geq \psi_{2}$ in $\Omega_{T}$. 
Let $u_{1}$ and $u_{2}$ denote the corresponding unique solutions to (3.1). 
Then
\begin{equation*}
    u_{1} \geq u_{2} \qquad \text{in } \Omega_{T}.
\end{equation*}
\end{cor}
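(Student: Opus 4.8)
The plan is to deduce the comparison directly from the monotone iteration used to construct the solutions in Theorem 3.1, so that essentially no new machinery is needed. Write $\mathcal{T}_{1},\mathcal{T}_{2}$ for the operators defined by (3.2) associated with the data $(F_{1},\psi_{1})$ and $(F_{2},\psi_{2})$ respectively, and let $\{u_{k}^{1}\}_{k\geq0}$ and $\{u_{k}^{2}\}_{k\geq0}$ be the corresponding iterates built as in (3.3)--(3.4), so that $u_{k}^{j}\to u_{j}$ pointwise on $\Omega_{T}\cup\Gamma_{p}^{\varepsilon}$, the limit being attained after finitely many steps at each point since $T<\infty$. The claim will follow once we show, by induction on $k$, that
\[
u_{k}^{1}(x,t)\ \geq\ u_{k}^{2}(x,t)\qquad\text{for every }(x,t)\in\Omega_{T}\cup\Gamma_{p}^{\varepsilon}\text{ and every }k\geq0 .
\]

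For $k=0$ this is exactly the hypothesis: $u_{0}^{1}=\psi_{1}\geq\psi_{2}=u_{0}^{2}$ on $\Omega_{T}$ and $u_{0}^{1}=F_{1}\geq F_{2}=u_{0}^{2}$ on $\Gamma_{p}^{\varepsilon}$. Assume $u_{k}^{1}\geq u_{k}^{2}$ pointwise. The point is that $\mathcal{T}_{j}$ is monotone \emph{jointly} in its argument and in its data: the maps $v\mapsto\sup_{B_{\varepsilon}(x)}v(\cdot,t-\tfrac{\varepsilon^{2}}{2})$, $v\mapsto\inf_{B_{\varepsilon}(x)}v(\cdot,t-\tfrac{\varepsilon^{2}}{2})$ and $v\mapsto\dashint_{B_{\varepsilon}(x)}v(\cdot,t-\tfrac{\varepsilon^{2}}{2})\,dy$ are all order-preserving, the weights $\tfrac{\alpha}{2},\tfrac{\alpha}{2},\beta$ are nonnegative (here $\alpha\in[0,1]$, $\beta=1-\alpha$), and $\max\{a,b\}$ is nondecreasing in both $a$ and $b$. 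Consequently, for $(x,t)\in\Omega_{T}$, combining $u_{k}^{1}\geq u_{k}^{2}$ with $\psi_{1}\geq\psi_{2}$ gives
\[
u_{k+1}^{1}(x,t)=\mathcal{T}_{1}u_{k}^{1}(x,t)\ \geq\ \mathcal{T}_{2}u_{k}^{2}(x,t)=u_{k+1}^{2}(x,t),
\]
whereas for $(x,t)\in\Gamma_{p}^{\varepsilon}$ one simply has $u_{k+1}^{1}=F_{1}\geq F_{2}=u_{k+1}^{2}$. This completes the induction, and it is consistent with the time-layer stabilization (3.4), since the inductive step only uses values on the previous slice $t-\tfrac{\varepsilon^{2}}{2}$.

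Letting $k\to\infty$ and using that monotone (indeed, eventually constant) limits preserve inequalities, we obtain $u_{1}=\lim_{k}u_{k}^{1}\geq\lim_{k}u_{k}^{2}=u_{2}$ on $\Omega_{T}\cup\Gamma_{p}^{\varepsilon}$, in particular in $\Omega_{T}$. I do not expect a genuine obstacle here; the only thing worth stating carefully is the joint monotonicity of $\mathcal{T}_{j}$ recorded above, together with the convergence of the iteration of Theorem 3.1 at each point. If one prefers to avoid the iteration, the same conclusion can be reached by a direct backward-in-time argument: if $w:=u_{2}-u_{1}$ attained a positive value somewhere in $\Omega_{T}$, the DPP (3.1) for $u_{2}$, compared with that for $u_{1}$ and using $\psi_{2}\leq\psi_{1}$, would force $w$ to be positive at some point of the slice $t-\tfrac{\varepsilon^{2}}{2}$, and iterating this finitely many times would push a positive value of $w$ into $\Gamma_{p}^{\varepsilon}$, contradicting $F_{2}\leq F_{1}$; the monotone-iteration proof above is, however, the cleaner route.
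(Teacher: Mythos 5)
Your proposal is correct and follows essentially the same route as the paper's own proof: monotone iteration of the operators $\mathcal{T}_{j}$ from the ordered initial iterates, induction on $k$, and passing the inequality to the pointwise (eventually constant) limit. Your write-up is in fact somewhat more explicit than the paper's, since you spell out the joint monotonicity of $\mathcal{T}_{j}$ in both the argument and the data $(\psi_{j},F_{j})$, which the paper condenses into ``monotone by construction.''
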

\begin{proof}
We respect the notation in the proof of Theorem 3.1. The operator $\mathcal{T}$ is monotone by construction. Choose initial iterates $u^{1}_{0},~u^{1}_{0}$ with $u^{1}_{0}\geq u_{0}^{2}$ defined as in (3.3) and define $u^{j}_{i+1}=\mathcal{T} u_{i}^{j}$ for $j=1,2$. By monotonicity we have $u^{1}_{i}\geq u_{i}^{2}$ for all $i$. For each $(x,t)$ the sequence stabilize pointwise to $u^{1}(x,t)$ and $u^{2}(x,t)$, so the inequality pass to the limit, giving $u^{1}\geq u^{2}$.\end{proof}
Let $0 < \varepsilon < \varepsilon_0$ denote the spatial step size. For each $0 < t < T$, define the integer $N(t)$ by
\[
\frac{2t}{\varepsilon^2} \leq N(t) < \frac{2t}{\varepsilon^2} + 1,
\]
that is, we use the shorthand notation \( N(t) = \lfloor \tfrac{2t}{\varepsilon^2} \rfloor \).  
Set \( t_0 = t \) and define the discrete time levels recursively by
\[
t_{k+1} = t_k - \frac{\varepsilon^2}{2}, \quad k = 0, 1, \ldots, N(t) - 1.
\]
Consequently,
\[
t_k  = t_{N(t)} + \frac{\varepsilon^2}{2}\,(N(t) - k), \quad k = 0, 1, \ldots, N(t),
\]
where \( t_{N(t)} \in \big(-\tfrac{\varepsilon^2}{2}, 0\big] \).  
When no confusion arises, we simply write \( N \) for \( N(t) \).  

Next, we establish that the game possesses a well-defined value. Combined with the previous results, this leads to the following theorem.

\begin{thm}
    The value functions of tug of war with noise $u_{I}$ and $u_{II}$ defined in (2.8) and (2.9) respectively, with payoff function $G$ defined in (2.5), coincide with the DPP function $u$ defined (3.1).
\end{thm}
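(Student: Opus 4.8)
The plan is to show the two inequalities $u_I \le u \le u_{II}$ together with the trivial $u_I \le u_{II}$, forcing all three to coincide; the core mechanism is that the DPP solution $u$ supplies an (almost) optimal strategy for each player, and conversely any strategy is controlled by $u$ via a martingale/supermartingale argument over the stopped process. Concretely, I would first record that since the game terminates after at most $N$ rounds (as $\tau_N \le \tau_N^0 \le N$), all expectations are finite sums and no measurability or integrability subtleties arise beyond the Borel measurability already established in Theorem 3.1 and the Ionescu--Tulcea construction from Section 2.

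For the bound $u_I \ge u$ (equivalently $u_{II} \ge u$ after the symmetric argument), I would fix $\delta > 0$ and have player I adopt the strategy $\sigma_I$ that, at a token position $(x_k,t_k)\in\Omega_T$, moves to a point $y\in B_\varepsilon(x_k)$ with $u(y,t_k-\tfrac{\varepsilon^2}{2}) \ge \sup_{B_\varepsilon(x_k)} u(\cdot,t_k-\tfrac{\varepsilon^2}{2}) - \delta 2^{-k}$, and stops the game (collecting $\psi$) exactly at those points where the DPP max in \eqref{eq:DPP} is attained by the $\psi(x,t)$ term. Against an arbitrary $\sigma_{II}$, I claim the sequence $M_k = u(x_k,t_k) + \delta 2^{-k}$ (until the stopping time) is a submartingale: conditioning on the history, the expected value of $u$ at the next step is $\tfrac{\alpha}{2}(u(\text{I's move}) + u(\text{II's move})) + \beta \fint_{B_\varepsilon} u \ge \tfrac{\alpha}{2}(\sup u - \delta 2^{-k} + \inf u) + \beta\fint_{B_\varepsilon} u \ge u(x_k,t_k) - \tfrac{\alpha}{2}\delta 2^{-k}$ by the DPP (using that II's move lies in $B_\varepsilon$ so $u(\text{II's move}) \ge \inf_{B_\varepsilon} u$), and one checks the $\delta 2^{-k}$ correction absorbs the error. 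Optional stopping at $\tau_N$ then gives $E[G(x_{\tau_N},t_{\tau_N})] = E[u(x_{\tau_N},t_{\tau_N})] \ge u(x_0,t_0) - 2\delta$ — here I use that on $\Gamma_p^\varepsilon$ we have $u = F = G$ and that when player I voluntarily stops in $\Omega_T$ the DPP forces $u = \psi = G$ at that point. Taking $\inf_{\sigma_{II}}$, then $\sup$ over I's choices, then $\delta \to 0$ yields $u_I(x_0,t_0) \ge u(x_0,t_0)$.

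For the reverse bound $u_{II} \le u$ (and symmetrically $u_I \le u$), player II plays the near-infimum strategy, moving to $y$ with $u(y,t_k-\tfrac{\varepsilon^2}{2}) \le \inf_{B_\varepsilon(x_k)} u(\cdot,t_k-\tfrac{\varepsilon^2}{2}) + \delta 2^{-k}$; now for \emph{any} strategy $\sigma_I$ and \emph{any} admissible stopping time $\tau_N \le \tau_N^0$ chosen by player I, the process $\tilde M_k = u(x_k,t_k) - \delta 2^{-k}$ is a supermartingale up to $\tau_N$, because the DPP gives $u(x_k,t_k) \ge \tfrac{\alpha}{2}(\sup u + \inf u) + \beta\fint_{B_\varepsilon} u$, so the expected next value is at most $\tfrac{\alpha}{2}(u(\text{I's move}) + \inf u + \delta 2^{-k}) + \beta\fint u \le u(x_k,t_k) + \tfrac{\alpha}{2}\delta 2^{-k}$ since $u(\text{I's move}) \le \sup_{B_\varepsilon} u$, and the crucial point at a voluntary stop is that $u(x_k,t_k) \ge \psi(x_k,t_k) = G(x_k,t_k)$ always, so stopping can only lower the payoff relative to $u$; optional stopping then yields $E[G(x_{\tau_N},t_{\tau_N})] \le u(x_0,t_0) + 2\delta$ uniformly in $\tau_N,\sigma_I$, hence $u_{II}(x_0,t_0) \le u(x_0,t_0)$ after taking suprema and $\delta\to 0$. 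Chaining $u_I \le u_{II} \le u \le u_I$ (the first inequality being immediate from the order of $\sup\inf$ versus $\inf\sup$) closes the proof.

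The main obstacle I anticipate is the bookkeeping around the stopping time in the optional-stopping step: one must be careful that $\tau_N$ is a genuine stopping time with respect to the natural filtration (player I's stopping rule may depend only on the history $(x_0,\ldots,x_k)$, which is built into the measure $\mathbb P^{x_0,N}_{\tau_N,\sigma_I,\sigma_{II}}$), that the sub/supermartingale property holds on the event $\{k < \tau_N\}$ and that on $\{k \ge \tau_N\}$ the transition is the Dirac mass $\delta_{x_k}$ so the process is frozen, and that the $\sum \delta 2^{-k}$ errors sum to at most $2\delta$ regardless of when stopping occurs. A secondary subtlety is verifying that the DPP genuinely equates $u$ with $G$ at every terminal configuration — both at the forced boundary $\Gamma_p^\varepsilon$ and at any interior point where player I elects to stop — which is exactly what licenses replacing $E[u(x_{\tau_N},t_{\tau_N})]$ by $E[G(x_{\tau_N},t_{\tau_N})]$; for the interior stop in the supermartingale direction one only needs the inequality $u \ge \psi$, not equality, which is why the argument still goes through even when stopping is suboptimal.
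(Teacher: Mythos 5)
Your proposal follows essentially the same route as the paper: prove the two one-sided bounds by letting one player follow the strategy that nearly optimizes $u$ within $B_\varepsilon$ (with geometric error tolerances $\delta 2^{-k}$), for player I stop precisely when the maximum in the DPP (3.1) is attained by the obstacle term (the paper's $\tau_N^*$), establish the sub/supermartingale property from the DPP, apply Doob's optional stopping, use $G=u$ at the stopped configuration in one direction and $G\le u$ for arbitrary stopping times in the other, and close with $u_I\le u_{II}$. The only flaw is that your correction terms have the wrong sign: with player I's near-maximizing strategy the per-step loss is $\tfrac{\alpha}{2}\delta 2^{-k}$, so the submartingale is $u(x_k,t_k)-\delta 2^{-k}$ (not $+\delta 2^{-k}$, for which the claimed inequality $E[M_k\mid\mathcal F_{k-1}]\ge M_{k-1}$ would require $1-\tfrac{\alpha}{2}\ge 2$), and symmetrically the supermartingale in the other direction is $u(x_k,t_k)+\delta 2^{-k}$; with these signs flipped, exactly as in the paper, your optional-stopping conclusions $E[G]\ge u(x_0,t_0)-2\delta$ and $E[G]\le u(x_0,t_0)+2\delta$ and the rest of the argument go through unchanged.
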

\begin{proof}
  First, we want to prove that 
  \begin{equation}
     u_{II}\leq u~~~\text{in}~\Omega_{T}. 
  \end{equation}
  Fix $(x_0,t_0)\in \Omega_{T}$ so that $N = \lfloor \frac{2t_{x}}{\varepsilon^2} \rfloor$. For any strategy $\sigma_{I}$ and stopping time $\tau_{N}\leq \tau^{0}_{N}$ where $\tau^{0}_{N}$ is defined in (2.4), we construct a strategy $\sigma_{II}^{0}$ for player II such that at $x_{k-1}\in \Omega$ we chooses to step to a point that almost minimizes $u$, this is, to a point $x_k\in B_{\varepsilon}(x_{k-1})$ such that
  \begin{equation*}
      u(u_{k},t_{k})\leq \underset{B_{\varepsilon}(x_{k-1})}{\inf} u(y,t_{k})+\frac{\eta}{2^{k}},
  \end{equation*}
for some fixed $\eta>0$. Moreover, this strategy can be chosen to be a Borel by using Lusin's countable selection theorem \cite{parv2}.

It follows from the choice of the strategies and the DPP (3.1), that
\begin{equation*}
\begin{split}
    E_{\tau_{N},\sigma_{I},\sigma_{II}^{0}}^{x_0,N}&[ u^{\varepsilon}(x_k,t_k)+\frac{\eta}{2^{k}}|x_0,...,x_{k-1} ]\\
    =&\frac{\alpha}{2}\left\{u(\sigma^{k-1}_{I}(x_0,..,x_{k-1}),t_k) +u(\sigma^{k-1}_{II}(x_0,..,x_{k-1}),t_k)    \right\}\\
    &+\beta\fint_{B_{\varepsilon}(x_{k-1})}u(y,t_k)~dy\\
    \leq& \frac{\alpha}{2}\left\{\underset{y\in B_{\varepsilon}(x_{k-1})}{\sup}u(y,t_k)+\underset{y\in B_{\varepsilon}(x_{k-1})}{\inf}u(y,t_k) \right\}\\
    &+\beta\fint_{B_{\varepsilon}(x_{k-1})}u(y,t_k)~dy+\frac{\eta}{2^{k}}(1+\frac{\alpha}{2})\\
\leq& ~u(x_{k-1},t_{k-1})+\frac{\eta}{2^{k-1}}.
\end{split}    
\end{equation*}
Thus, regardless of the strategy $\sigma_{I}$, the process $M_{k}=u(x_k,t_k)+\frac{\eta}{2^{k}}$ is supermartingale with respect to the history of the game. It follows that
\begin{equation}
\begin{split}
    u_{II}(x_0,t_0)=&\underset{\sigma_{II}}{\inf}\underset{\tau_{N},\sigma_{I}}{\sup}E_{\tau_{N},\sigma_{I},\sigma_{II}^{0}}^{x_0,N}[G(x_{\tau_{N}},t_{\tau_{N}})+\frac{\eta}{2^{\tau_{N}}}]\\
    \leq& \underset{\tau_{N},\sigma_{I}}{\sup}~E_{\tau_{N},\sigma_{I},\sigma_{II}^{0}}^{x_0,N}[M_{\tau_{N}}]\leq 
    \underset{\sigma_{I}}{\sup}~E_{\sigma_{I},\sigma_{II}^{0}}^{x_0,N}[M_{0}]=u(x_0,t_{0})+\eta,
\end{split}    
\end{equation}
where we used the fact that $G\leq u$, and the Doob's optional stopping theorem in view of the supermatingae property and the uniform boundedness of $M_{\tau_{N}}$. Since $\eta>0$, then (3.5) follows.

Next, we are going to prove that 
  \begin{equation}
     u\leq u_{I}~~~\text{in}~\Omega_{T}. 
  \end{equation}
Fix $\eta>0$ and fix any strategy $\sigma_{II}$. Player I employs a strategy $\sigma_{I}^{0}$ that maximizes $u$ at each step such that at $x_{k-1}\in\Omega$, he choose to step to a point $x_{k}\in B_{\varepsilon}(x_{k-1})$ that almost maximizes $u$ such that
\begin{equation*}
    u(x_k,t_{k})\geq \underset{B_{\varepsilon}(x_{k-1})}{\sup}~u(y,t_k)-\frac{\eta}{2^k}.
\end{equation*}
Define the following stopping time $\tau^*_{N}$ such that
\[
\tau_N^{*}
= \min\!\Bigl\{
N,\;
\inf\!\bigl\{
k \in \{0,\dots,N\} :
(x_k,t_k) \in \Gamma_{p}^{\varepsilon}
\ \text{or}\
u(x_k,t_k) = \psi(x_k,t_k)
\bigr\}
\Bigr\}.
\]
Note that $\tau^{*}_{N}\leq \tau_{N}\leq N$. Thereafter, consider $k$ such that $k-1<\tau^{*}_{N}$ so that $(x_{k-1},t_{k-1})\notin \Gamma_{p}^{\varepsilon}$ and $u(x_{k-1},t_{k-1})>\psi(x_{k-1},t_{k-1})$. Then, from  (3.1) we get
\begin{equation*}
    u(x_{k-1},t_{k-1})=\frac{\alpha}{2}\left\{\underset{y\in B_{\varepsilon}(x_{k-1})}{\sup} u(y,t_{k})  +\underset{y\in B_{\varepsilon}(x_{k-1})}{\inf} u(y,t_{k})   \right\}+\beta \fint_{B_{\varepsilon}}u(y,t_k)~dy.
\end{equation*}
Therefore,
\[
\begin{split}
E^{x_{0},N}_{\tau^{*}_{N},\sigma_{I}^{0},\sigma_{II}}&[u(x_k,t_k)-\frac{\eta}{2^{k}}|x_0,..,x_k-1]\\
=&\frac{\alpha}{2}\left\{u(\sigma^{k-1}_{I}(x_0,..,x_{k-1}),t_k) +u(\sigma^{k-1}_{II}(x_0,..,x_{k-1}),t_k)    \right\}\\
    &+\beta\fint_{B_{\varepsilon}(x_{k-1})}u(y,t_k)~dy\\
\geq&  \frac{\alpha}{2}\left\{\underset{y\in B_{\varepsilon}(x_{k-1})}{\sup}u(y,t_k)+\underset{y\in B_{\varepsilon}(x_{k-1})}{\inf}u(y,t_k) \right\}\\
    &+\beta\fint_{B_{\varepsilon}(x_{k-1})}u(y,t_k)~dy-\frac{\eta}{2^{k}}(1+\frac{\alpha}{2})\\  
=& u(x_{k-1},t_{k-1})- \frac{\eta}{2^{k}}(1+\frac{\alpha}{2})  \geq  u(x_{k-1},t_{k-1})- \frac{\eta}{2^{k-1}}.
\end{split}
\]
The case where $k-1=\tau^{*}_{N}$ is trivial. Consequently, $M_{k}=u(x_k,t_k)-\frac{\eta}{2^{k}}$ is submartingale with respect to the history of the game. As a result, by Doob's optional stopping theorem we have
\[
\begin{split}
u_{I}(x_0,t_0)&\geq \underset{\sigma_{II}}{\inf}E^{x_{0},N}_{\tau^{*}_{N},\sigma_{I}^{0},\sigma_{II}}[G(x_{\tau^{*}_{N}},t_{\tau^{*}_N})-\frac{\eta}{2^{\tau^{*}_{N}}}]=\underset{\sigma_{II}}{\inf}E^{x_{0},N}_{\tau^{*}_{N},\sigma_{I}^{0},\sigma_{II}}[M_{\tau^{*}_{N}}]\\
&\geq \underset{\sigma_{II}}{\inf}E^{x_{0},N}_{\tau^{*}_{N},\sigma_{I}^{0},\sigma_{II}}[u(x_0,t_0)-\eta]=u(x_0,t_0)-\eta,
\end{split}
\]
where we used the definition of $u_{I}$, the fact that $G(x_{\tau^{*}_{N}},t_{\tau^{*}_{N}})=u(x_{\tau^{*}_{N}},t_{\tau^{*}_{N}})$ derived from the definition of $\tau^{*}_{N}$. Since $\eta>0$ was arbitrary, (3.7) follows. 
\end{proof}
\section{Compactness and convergence analysis}
This section establishes the compactness framework needed to pass to the limit in the discrete game. 
We prove uniform boundedness and a uniform modulus of continuity for the discrete value functions \(u^\varepsilon\), with constants independent of the discretization parameter \(\varepsilon\). 
By Arzelà–Ascoli this yields relative compactness in \(C(\overline{\Omega_T})\), so one may extract a subsequence converging uniformly to a continuous limit. 
Finally, identifying this limit as the unique viscosity solution of (1.1) gives the convergence of the discrete values to the continuous solution. The proof of the following lemma can be found in \cite{man1}.
\begin{lem}
    Let$\{u^{\varepsilon}:\overline{\Omega_{T}}\longrightarrow\mathbb{R},~\varepsilon>0\}$ be a set of functions such that
    \begin{itemize}
        \item there exists $C>0$ so that $|u^{\varepsilon}(x,t)|<C$ for every $\varepsilon>0$ and every $(x,t)\in\overline{\Omega_T}$,
        \item given $\eta>0$, there are constants $r_0$ and $\varepsilon_0$ such that for every $\varepsilon<\varepsilon_0$ and any $(x,t),(y,s)\in \overline{\Omega_T}$ with $|x-y|+|t-s|<r_0$, it holds that
        \[
        |u^{\varepsilon}(x,t)-u^{\varepsilon}(y,s)|<\eta.
        \]
    \end{itemize}
    Then there exists a uniformly continuous function $u:\overline{\Omega_T}\longrightarrow\mathbb{R}$ and a subsequences still denoted by $\{u^{\varepsilon}\}$ such that
    \[
    u^{\varepsilon}\longrightarrow ~u~~~~~\text{uniformly in}~\overline{\Omega_T}
    \]
    as $\varepsilon\longrightarrow 0$.
\end{lem}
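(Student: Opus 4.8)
The statement is the standard Arzelà--Ascoli-type compactness lemma adapted to families that are only \emph{asymptotically} equicontinuous; the one point requiring attention is that the $u^\varepsilon$ are not assumed continuous, so the classical theorem cannot be quoted verbatim. The plan is to run a diagonal extraction on a countable dense set, then upgrade pointwise convergence to uniform convergence using the uniform modulus of continuity, exploiting that $\overline{\Omega_T}=\overline{\Omega}\times[0,T]$ is compact.

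First I would fix a countable dense subset $D=\{z_j\}_{j\ge 1}\subset\overline{\Omega_T}$. By the uniform bound $|u^\varepsilon|<C$, for each fixed $j$ the set $\{u^\varepsilon(z_j):\varepsilon>0\}$ is bounded in $\mathbb{R}$; a Cantor diagonal argument then yields a sequence $\varepsilon_k\to 0$ such that $u^{\varepsilon_k}(z_j)$ converges as $k\to\infty$ for every $j$.

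Next I would show $\{u^{\varepsilon_k}\}$ is uniformly Cauchy on $\overline{\Omega_T}$. Given $\eta>0$, apply the hypothesis with $\eta/3$ to obtain $r_0,\varepsilon_0>0$ such that $|u^\varepsilon(x,t)-u^\varepsilon(y,s)|<\eta/3$ whenever $\varepsilon<\varepsilon_0$ and $|x-y|+|t-s|<r_0$. By compactness, cover $\overline{\Omega_T}$ by finitely many sets of diameter $<r_0$, each containing a point of $D$; call these points $z_{j_1},\dots,z_{j_m}$. Choose $K$ so large that $\varepsilon_k<\varepsilon_0$ for $k\ge K$ and $|u^{\varepsilon_k}(z_{j_i})-u^{\varepsilon_l}(z_{j_i})|<\eta/3$ for all $i\in\{1,\dots,m\}$ and all $k,l\ge K$. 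Then for any $(x,t)\in\overline{\Omega_T}$, picking $z_{j_i}$ in the same covering set and writing
\[
|u^{\varepsilon_k}(x,t)-u^{\varepsilon_l}(x,t)|
\le |u^{\varepsilon_k}(x,t)-u^{\varepsilon_k}(z_{j_i})|
+|u^{\varepsilon_k}(z_{j_i})-u^{\varepsilon_l}(z_{j_i})|
+|u^{\varepsilon_l}(z_{j_i})-u^{\varepsilon_l}(x,t)|<\eta
\]
for $k,l\ge K$. Hence $\{u^{\varepsilon_k}\}$ converges uniformly on $\overline{\Omega_T}$ to a limit $u$, which is automatically bounded.

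Finally I would check that $u$ is uniformly continuous. Given $\eta>0$, take $r_0,\varepsilon_0$ from the hypothesis (for this $\eta$); for any $(x,t),(y,s)$ with $|x-y|+|t-s|<r_0$ and any $k$ with $\varepsilon_k<\varepsilon_0$ we have $|u^{\varepsilon_k}(x,t)-u^{\varepsilon_k}(y,s)|<\eta$, and letting $k\to\infty$ gives $|u(x,t)-u(y,s)|\le\eta$. This yields a common modulus of continuity for $u$, so $u$ is uniformly continuous, completing the proof. The only genuinely delicate point is the bookkeeping in the three-term estimate above, where one must make sure the ``center'' terms $u^{\varepsilon_k}(z_{j_i})$ are handled by the diagonal subsequence while the two ``approximation'' terms are controlled by the asymptotic equicontinuity with $\varepsilon_k$ already small; everything else is routine.
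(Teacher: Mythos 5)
Your proof is correct and follows essentially the same route as the paper, which simply defers this lemma to the cited reference \cite{man1}, where the argument is exactly this variant of Arzelà--Ascoli for possibly discontinuous but asymptotically equicontinuous families: diagonal extraction on a countable dense set, a three-term estimate giving uniform Cauchyness, and passage of the modulus of continuity to the limit. The only cosmetic remark is that the diagonal argument should formally start from a fixed sequence $\varepsilon_k\to 0$ within the family before extracting nested subsequences, which is implicit in what you wrote.
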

Next, we introduce a boundary regularity condition for the domain $\Omega$.
\begin{defn}
    We say that a domain $\Omega$ satisfies an exterior sphere condition if for any $y\in \partial\Omega$, there exists $B_{\delta}(z)\subset \mathbb{R}^{N}\setminus\Omega$ with $\delta>0$ such that $y\in \partial B_{\delta}(z)$.
\end{defn}
Throughout this section, we assume that $\Omega$ satisfies the exterior sphere condition and $\Omega\subset B_{R}(z)$ for some $R>0$. First, we consider the case where $(y,t_y)$ is a point in the lateral boundary strip.
\begin{lem}
   Let  $F:\Gamma_{p}^{\varepsilon}\longrightarrow\mathbb{R}$ and $\psi:\mathbb{R}^{N+1}\longrightarrow\mathbb{R}$ be two bounded, Borel function satisfying (2.1)-(2.3). Let $u^{\varepsilon}:\Omega_T\cap\Gamma_{p}^{\varepsilon}\longrightarrow\mathbb{R}$ be the solution (3.1)  such that
   \begin{equation}
       \begin{split}
           |u^{\varepsilon}(x,t)-u^{\varepsilon}(y,t_y)|\leq &C\bigr(\min\{|x-y|^{\frac{1}{2}}+\varepsilon^{\frac{1}{2}},t_{x}^{\frac{1}{2}}+\varepsilon\}+\min\{|x-y|+\varepsilon,t_{x}+\varepsilon^{2}\}\bigl)\\
           & +C_{F,\psi}\left(|t_{x}-t_{y}|^{\frac{1}{2}}+|x-y|+\delta  \right)
       \end{split}
   \end{equation}
   for every $(x,t_x)\in \Omega_{T}$ and $y\in S_{\varepsilon}$
\end{lem}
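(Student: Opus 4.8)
The plan is to estimate the value function near the lateral boundary strip by a barrier/stopping-time argument, comparing the game started at $(x,t_x)$ with the known boundary value at $(y,t_y)$. Since the game is a supremum over player I's strategies and stopping times against an infimum over player II's, I will bound $u^\varepsilon(x,t_x)$ from above and below separately. For the upper bound, player II will adopt a ``pull toward $y$'' strategy: at each step player II moves the token a distance $\varepsilon$ (minus a controlled correction) in the direction of the center $z$ of the exterior sphere $B_\delta(z)$ touching $\partial\Omega$ near $y$. The radial coordinate $\rho_k = |x_k - z|$ then becomes (essentially) a supermartingale up to a drift term coming from the $\beta$-noise part and the geometry, exactly as in the standard tug-of-war boundary-regularity estimates. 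One then runs the game until the token exits into $S_\varepsilon$ (time $\tau_N^0$) or the clock runs out; Doob's optional stopping theorem applied to a suitable function of $\rho_k$ and to the time coordinate yields that the token exits near $y$, with the exit position within $C(|x-y|+\varepsilon+\delta)$ and the exit time within $C(t_x+\varepsilon^2)$ of $t_y=0$ (or the relevant boundary time). Feeding this into $|F(x_{\tau},t_{\tau}) - F(y,t_y)| \le C_1(|x_\tau - y| + |t_\tau - t_y|^{1/2})$ and using $\psi \le F$ (so that player I's early-stopping option only helps the estimate go through, since $u^\varepsilon \ge \psi$ and player II can still force the $F$-type bound) gives the upper inequality.

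For the lower bound, I would symmetrically let player I play the ``pull toward $y$'' strategy and never exercise the early-stopping option (or exercise it only when it helps); then the same radial process is a submartingale, and the same exit estimates apply with reversed inequalities. Combining, $|u^\varepsilon(x,t_x) - F(y,t_y)| $ is controlled by the right-hand side, and since $u^\varepsilon(y,t_y) = F(y,t_y)$ on $\Gamma_p^\varepsilon$ the claim follows. The two $\min\{\cdot,\cdot\}$ terms in the statement reflect two competing ways the game can terminate: either it exits spatially (governed by $|x-y|$ and the $\varepsilon$-scale of each step), contributing the $|x-y|^{1/2}+\varepsilon^{1/2}$ and $|x-y|+\varepsilon$ pieces, or it runs out of clock after $N(t_x) \approx 2t_x/\varepsilon^2$ steps (governed by $t_x$), contributing the $t_x^{1/2}+\varepsilon$ and $t_x + \varepsilon^2$ pieces — whichever happens first is what the $\min$ records. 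The term $C_{F,\psi}(|t_x-t_y|^{1/2}+|x-y|+\delta)$ is just the Lipschitz modulus of $F$ (and $\psi$) evaluated at the discrepancy between the nominal comparison point $(y,t_y)$ and the actual exit point, plus the $\delta$ slack from the exterior-sphere radius.

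The key steps in order: (1) fix $y\in S_\varepsilon$, pick the exterior ball $B_\delta(z)$ with $y$ essentially on $\partial\Omega$ near the strip, and set $\rho_k=|x_k-z|$; (2) define the pull strategy and compute $E[\,f(\rho_k)\mid x_0,\dots,x_{k-1}]$ for a concave increasing $f$ (e.g. $f(\rho)=\rho$ or $f(\rho)=-\rho^{-\gamma}$ or $f(\rho)=\log\rho$ depending on dimension, as in del Pezzo–Rossi / Manfredi–Parviainen–Rossi), showing it is a supermartingale up to an $O(\varepsilon^2)$-per-step error; (3) track the time coordinate $t_k = t_x - k\varepsilon^2/2$ deterministically, so $\tau_N \le N(t_x)$ gives $t_{\tau_N} \ge t_x - N\varepsilon^2/2 \in(-\varepsilon^2/2,0]$, i.e.\ $|t_{\tau_N} - 0| \le t_x$ roughly, with an $\varepsilon^2$ correction; (4) apply optional stopping to control $E[\rho_{\tau_N}]$ and hence $E[|x_{\tau_N}-y|]$ in terms of $|x-y|$, $\varepsilon$, and — via the second way the game can end — in terms of $t_x$; (5) combine with the Lipschitz bounds on $F$ and $\psi$ and the DPP inequality $u^\varepsilon \ge \psi$ to get both directions. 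The main obstacle I expect is step (2)–(4): getting the radial supermartingale estimate sharp enough that the stopping-time analysis produces \emph{exactly} the two $\min$-structures claimed, rather than a cruder single bound. This requires carefully separating the ``spatial exit'' regime from the ``time-exhaustion'' regime and choosing the barrier function $f$ so that the error terms from the $\beta$-average (which, by Taylor expansion of $f$ against the uniform measure on $B_\varepsilon(x_k)$, contributes a term like $\tfrac{\beta}{2}\varepsilon^2 \Delta f / (N+2)$) are absorbed correctly; this is the standard but delicate part of such boundary estimates, and the presence of the obstacle $\psi$ means one must also check that early stopping cannot produce a worse constant, which follows from $\psi\le F$ together with $u^\varepsilon\ge\psi$.
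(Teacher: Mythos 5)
Your architecture is essentially the paper's: exterior ball $B_\delta(z)$ at $y$, a pull-toward-$z$ strategy for the appropriate player in each direction, the Lipschitz bounds (2.1)--(2.3) together with $\psi\le F$ to control the payoff even when player I stops early, and the two $\min$ terms read off the dichotomy between spatial exit and exhaustion of the $N\approx 2t_x/\varepsilon^2$ time steps, followed (implicitly) by a triangle inequality through $(y,t_x)$ for the case $t_x\neq t_y$. The genuine gap is the one you flag yourself in steps (2)--(4), and it is not a technicality: the radial supermartingale you actually set up (the choice $f(\rho)=\rho$, i.e.\ $M_k=|x_k-z|-C\varepsilon^2 k$) only gives, via optional stopping and Jensen,
\begin{equation*}
E\big[|x_{\tau_N}-y|+\varepsilon(\tau_N/2)^{1/2}\big]\;\le\;|x-y|+C\varepsilon^{2}E[\tau_N]+C\varepsilon\big(E[\tau_N]\big)^{1/2}+2\delta,
\end{equation*}
which controls the exit position \emph{in terms of} $E[\tau_N]$ but yields no bound on $E[\tau_N]$ itself (the per-step drift estimate has the wrong sign for that). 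The whole $\min\{|x-y|+\varepsilon,\varepsilon^{2}N\}$ structure in the statement comes from the separate estimate $E[\tau_N]\le\min\{N,\tfrac{2C_\delta}{\beta\varepsilon^{2}}(|x-y|+\varepsilon)\}$, which your proposal asserts as a hoped-for outcome but never proves. In the paper this is the heart of the argument: the run is embedded into an auxiliary time-independent game on $Y=B_R(z)$, the strategies and the stopping time are extended there, the induced measures are compared (Lemma 5.2 of Lewicka--Manfredi), and the expected hitting time of $\overline{B}_\delta(z)$ under the pull-toward-$z$ strategy is bounded by Lemma 5.4 of the same reference. Your mention of concave barriers such as $-\rho^{-\gamma}$ or $\log\rho$ points at the correct class of tools for that hitting-time bound, but you neither fix one nor carry out the Taylor computation against the $\beta$-noise term, so the claimed two-$\min$ estimate is not actually derived.

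Two smaller points. First, player I's early stopping enters not only through the payoff (where $\psi\le F$ and the Lipschitz bounds indeed suffice) but also through the stopping time: the estimate must hold uniformly over all admissible $\tau_N\le\tau_N^{0}$, which is why the paper passes to an extended stopping time $\overline{\tau}\le\overline{\tau}_0$ dominated by the exit time of the auxiliary game; your write-up treats the stopping rule only at the payoff level. Second, your phrase ``exit time within $C(t_x+\varepsilon^2)$ of $t_y=0$'' conflates this lemma (lateral strip, $y\in S_\varepsilon$, arbitrary $t_y$) with the initial-boundary case of the next lemma; here the time discrepancy at the same time level is $|t_{\tau_N}-t_x|^{1/2}=\varepsilon(\tau_N/2)^{1/2}$, and the extra $|t_x-t_y|^{1/2}$ term is recovered afterwards from $u^\varepsilon=F$ on the lateral strip, a reduction you should state explicitly.
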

\begin{proof}
In the first step, we take $t_x=t_{y}=t_0$, $x=x_0$ and $N = \lfloor \frac{2t_x}{\varepsilon^2} \rfloor$. By the exterior sphere condition, there exists $B_{\delta}(z)\subset \mathbb{R}^{N}\setminus\Omega$ such that $y\in \partial B_{\delta}(z)$. Assume we have fixed a particular strategy $\sigma_{z,II}$ of pulling toward $z$ for player II. Then, by (2.8), we have
\begin{equation}
u^{\varepsilon}(x,t_0)-u^{\varepsilon}(y,t_0)\leq \underset{\tau_{N},\sigma_{I}}{\sup}~E^{x_{0},N}_{\tau_{N},\sigma_{I},\sigma_{z,II}}(G(x_{\tau_{N}},t_{\tau_{N}})-F(y,t_0)).
\end{equation}
Since $\psi(y,t_0)\leq F(y,t_0)$, and by using (2.1)-(2.3), we have
\[
\begin{split}
G(x_{\tau_{N}},t_{\tau_{N}})-F(y,t_0)&\leq C_{F,\psi}(|x_{\tau_{N}}-y|+|t_{\tau_{N}}-t_0|^{\frac{1}{2}})\\
&= C_{F,\psi}\left(|x_{\tau_{N}}-y|+\varepsilon\left( \frac{\tau_{N}}{2}\right)^{\frac{1}{2}}\right).
\end{split}
\]
Then, (4.2) becomes
\begin{equation}
u^{\varepsilon}(x,t_0)-u^{\varepsilon}(y,t_0)\leq C_{F,\psi} \underset{\tau_{N},\sigma_{I}}{\sup}~E^{x_{0},N}_{\tau_{N},\sigma_{I},\sigma_{z,II}}(|x_{\tau_{N}}-y|+\varepsilon\left( \frac{\tau_{N}}{2}\right)^{\frac{1}{2}}).
\end{equation}
By the same method for a fixed strategy $\sigma_{z,I}$ of pulling toward $z$ for player I, and using (2.9) we arrive at
\begin{equation}\begin{split}
    u^{\varepsilon}&(x,t_0)-u^{\varepsilon}(y,t_0)\geq \underset{\sigma_{II}}{\inf}~E^{x_{0},N}_{\tau^{0}_{N},\sigma_{z,I},\sigma_{II}}(G(x_{\tau_{N}^{0}},t_{\tau_{N}^{0}})-F(y,t_0))\\
    =&\underset{\sigma_{II}}{\inf}~E^{x_{0},N}_{\tau^{0}_{N},\sigma_{z,I},\sigma_{II}}(F(x_{\tau_{N}^{0}},t_{\tau_{N}^{0}})-F(y,t_0))
    \geq - C_{F} \underset{\tau_{N},\sigma_{II}}{\sup}~E^{x_{0},N}_{\tau_{N},\sigma_{z,I},\sigma_{II}}(|x_{\tau_{N}}-y|+\varepsilon\left( \frac{\tau_{N}}{2}\right)^{\frac{1}{2}}),
\end{split}
\end{equation}
where the supremum is taken over all admissible stopping times $\tau_{N}\leq \tau_{N}^{0}$. Next, we define the strategy $\sigma_{z,II}$ such that
\begin{equation}
\sigma^{k}_{z,II}(x_0,..,x_k)=\sigma^{k}_{z,II}(x_{k})=\begin{cases}
    x_{k}+(\varepsilon-\varepsilon^{3})\frac{z-x_k}{|z-x_k|}~~~&\text{if}~x_{k}\in \Omega\\
    x_k~~~&\text{if}~x_k\in S_{\varepsilon},
\end{cases}
\end{equation}
and let $\sigma_{I}$ be any strategy for player I. Let $\varepsilon<\frac{\delta}{3}$, then for $(k-1)<\tau_{N}$ we have
\[
\begin{split}
    E^{x_0,N}_{\tau_{N},\sigma_{I},\sigma_{z,II}}&\{|x_{k}-z|-C\varepsilon^{2}k~|x_0,...,x_{k-1}\}\\
    =&\frac{\alpha}{2}|\sigma^{k-1}_{I}(x_0,..,x_k)-z|+\frac{\alpha}{2}|\sigma^{k-1}_{z,II}(x_k)-z|+\beta\fint_{B_{\varepsilon}(x_{k-1})}|\omega-z|~d\omega~-C\varepsilon^{2}k\\
    \leq&\frac{\alpha}{2}\{|x_{k-1}-z|+\varepsilon\}+\frac{\alpha}{2}\{|x_{k-1}-z|-(\varepsilon-\varepsilon^{3})\}+\beta |x_{k-1}-z|\\
    &+\beta C_{\delta}\varepsilon^{2}-C\varepsilon^{2}k\\
    \leq&|x_{k-1}-z|-C\varepsilon^{2}(k-1),
    \end{split}
\]
where we have used the fact that $|x_{k-1}-z|>\varepsilon-\varepsilon^{3}$ and the estimate
\[
\fint_{B_{\varepsilon}(x_{k-1})}|\omega-z|~d\omega\leq |x_{k-1}-z|+C_{\delta}\varepsilon^{2}
\]
in the first inequality. For the case where $(k-1)\geq \tau_{N}$, we have
\[\begin{split}
E^{x_{0},N}_{\tau_{N},\sigma_{I},\sigma_{z,II}}\{|x_{k}-z|-C\varepsilon^{2}k|~x_0,..,x_{k-1}\}&=|x_{k-1}-z|-C\varepsilon^{2}k\\
&\leq |x_{k-1}-z|-C\varepsilon^{2}(n-1).
\end{split}
\]
Therefor, $M_{k}=|x_{k}-z|-C\varepsilon^{2}k$ is supermartingale. By applying Doob's stopping theorem and Jensen's inequality, we get
\begin{equation}
    \begin{split}
E^{x_{0},N}_{\tau_{N},\sigma_{I},\sigma_{z,II}}&[|x_{\tau_{N}}-y|+\varepsilon(\frac{\tau_{N}}{2})^{\frac{1}{2}}]\leq E^{x_{0},N}_{\tau_{N},\sigma_{I},\sigma_{z,II}}[|x_{\tau_{N}}-z|+\varepsilon(\frac{\tau_{N}}{2})^{\frac{1}{2}}]+\delta\\
&=E^{x_{0},N}_{\tau_{N},\sigma_{I},\sigma_{z,II}}[|x_{\tau_{N}}-z|-C\varepsilon^{2}\tau_{N}]+C\varepsilon^{2}E^{x_{0},N}_{\tau_{N},\sigma_{I},\sigma_{z,II}}[\tau_{N}]+\frac{\varepsilon}{\sqrt{2}}E^{x_{0},N}_{\tau_{N},\sigma_{I},\sigma_{z,II}}[\tau_{N}^{\frac{1}{2}}]+\delta\\
&\leq |x_0-z|+C\varepsilon^{2}E^{x_{0},N}_{\tau_{N},\sigma_{I},\sigma_{z,II}}[\tau_N]+C\varepsilon(E^{x_{0},N}_{\tau_{N},\sigma_{I},\sigma_{z,II}}[\tau_{N}])^{\frac{1}{2}}+\delta\\
&\leq |x_0-y|+C\varepsilon^{2}E^{x_{0},N}_{\tau_{N},\sigma_{I},\sigma_{z,II}}[\tau_N]+C\varepsilon(E^{x_{0},N}_{\tau_{N},\sigma_{I},\sigma_{z,II}}[\tau_{N}])^{\frac{1}{2}}+2\delta
\end{split}
\end{equation}
Next, our main goal is to approximate $E^{x_{0},N}_{\tau_{N},\sigma_{I},\sigma_{z,II}}[\tau_{N}]$. For this reason, we consider a new time-independent game board $Y=B_{R}(z)\supset H$ for some $R>0$ with the same initial token position $x_0$. Let $\overline{\sigma_{I}}$ be an extension of the strategy $\sigma_{I}$ such that
\[
\forall(x_0,..,x_k)\in Y^{k+1},~~\overline{\sigma}_{I}^{k}(x_{0},..,x_{k})=\begin{cases}
 \sigma_{I}^{k}(x_{0},..,x_{k})~~~&\text{if}~(x_{0},..,x_{k})\in H^{k+1},\\
 x_{k}~~~&\text{elsewise}.
\end{cases}
\]
As for the strategy for player II, we define
\[
\overline{\sigma}_{z,II}^{k}(x_{0},..,x_{k})=\overline{\sigma}^{k}_{z,II}(x_{k})=\begin{cases}
    x_{k}+(\varepsilon-\varepsilon^{3})\frac{z-x_k}{|z-x_{k}|}~~~&\text{if}~x_{k}\in Y\setminus \overline{B}_{\delta}(z),\\
    x_{k}~~~&\text{elsewise}.
\end{cases}
\]
Let $\overline{\tau}_0:Y^{\infty,x_{0}}\longrightarrow\mathbb{N}\cup\{+\infty\}$ be the exist time into the ball $\overline{B}_{\delta}(z)$, i.e.
\[
\overline{\tau}_0(\omega)=\min\{k\geq0;~|x_{k}-z|\leq \delta\},
\]
and let $\overline{\tau}:Y^{\infty,x_{0}}\longrightarrow\mathbb{N}\cup\{+\infty\}$ be a stopping time extending $\tau_{N}$, so that $\overline{\tau}|_{H^{N+1}}=\tau_{N}$ and $\overline{\tau}\leq\overline{\tau}_0$. Also, $\forall k\geq1$ and $x_{1},..,x_k\in Y$, we define the transition probabilities on $Y$ by
\[
\overline{\pi}_{k}(x_0,...,x_k)(A)=\begin{cases}
    \frac{\alpha}{2}\delta_{\overline{\sigma}_{I}(x_0,..,x_{k})}(A)+\frac{\alpha}{2}\delta_{\overline{\sigma}_{z,II}(x_0,..,x_{k})}(A)+\beta m(x_{k})(A)~~~&\text{for}~x_{k}\in Y\setminus\overline{B}_{\delta}(z),\\
    \alpha \delta_{x_{k}}(A)+\beta m(x_{k})(A)~~~&\text{for}~x_{k}\in B_{\delta}(z)\setminus\overline{B}_{\delta-\varepsilon}(z),\\
    \delta_{x_{k}}(A)~~~&\text{for}~x_{k}\in \overline{B}_{\delta-\varepsilon}(z),
\end{cases}
\]
where the probability $m(x)(A)$ is uniform in the set $B_{\varepsilon}(x)\cap Y$ and is given by:
\[
m(x)(A)=\frac{|A\cap( B_{\varepsilon}(x_k)\cap Y)|}{|B_{\varepsilon}(x_k)\cap Y|}.
\]
Next, we have 
\[
\begin{split}
E^{x_{0},N}_{\tau_{N},\sigma_{I},\sigma_{z,II}}[\tau_{N}]=&\sum_{i=1}^{N}i\mathbb{P}^{x_{0},i}_{\tau_{N},\sigma_{I},\sigma_{z,II}}[\omega\in H^{N+1},~\tau_{N}(\omega)=i]\\
=&\sum_{i=1}^{N}i\mathbb{P}^{x_{0},i}_{\tau_{N},\overline{\sigma}_{I},\overline{\sigma}_{z,II}}[\omega\in H^{N+1},~\tau_{N}(\omega)=i]\\
\leq&\min\{\sum_{i=1}^{\infty}i\mathbb{P}^{x_{0},i}_{\tau_{N},\sigma_{I},\sigma_{z,II}}[\omega\in Y^{\infty,x_{0}},~\overline{\tau}(\omega)=i],N\}=\min\{E^{x_{0}}_{\overline{\sigma}_{I},\overline{\sigma}_{z,II}}[\overline{\tau}],N\}\\
\leq&\min\{E^{x_{0}}_{\overline{\sigma}_{I},\overline{\sigma}_{z,II}}[\overline{\tau}_{0}],N\},
    \end{split}
\]
where we have used the fact that for $k<i=\tau_{N}(\omega)$, we have that $x_{k}\in \Omega$ and 
\[
\overline{\pi}_{k}(x_0,...,x_k)=\pi_{k}(x_0,...,x_k)~~~~~~\forall (x_0,..,x_{k})\in H^{k+1},
\]
and therefore by Lemma 5.2 in \cite{lew}, we have that
\[
\mathbb{P}^{x_{0},i}_{\tau_{N},\sigma_{I},\sigma_{z,II}}[\omega\in H^{N+1},~\tau_{N}(\omega)=i]=\mathbb{P}^{x_{0},i}_{\overline{\sigma}_{I},\overline{\sigma}_{z,II}}[\omega\in H^{N+1},~\tau_{N}(\omega)=i].
\]
Moreover, from the proof of Lemma 5.4 in \cite{lew}, we deduce the following estimate
\begin{equation}
E^{x_{0}}_{\overline{\sigma}_{I},\overline{\sigma}_{z,II}}[\overline{\tau}_{0}]\leq \frac{2C_{\delta}}{\beta\varepsilon^{2}}(|x_0-y|+\varepsilon).
\end{equation}
By putting (4.7) into (4.6), we arrive at
\[\begin{split}
E^{x_{0},N}_{\tau_{N},\sigma_{I},\sigma_{z,II}}[|x_{\tau_{N}}-y|+\varepsilon(\frac{\tau_{N}}{2})^{\frac{1}{2}}]\leq&|x_0-y|+C_{\delta}\min\{|x_{0}-y|+\varepsilon,\varepsilon^{2}N\}\\
&+C_{\delta}\left(\min\{|x_{0}-y|+\varepsilon,\varepsilon^{2}N\}\right)^{\frac{1}{2}}+2\delta
\end{split}
\]
Therefore, (4.2) becomes
\[
\begin{split}
    u^{\varepsilon}(x,t_{0})-u^{\varepsilon}(y,t_{0})&\leq C_{F,\psi,\delta}\min\{|x-y|+\varepsilon,\varepsilon^{2}N\}+C_{F,\psi,\delta}\left(\min\{|x-y|+\varepsilon,\varepsilon^{2}N\}\right)^{\frac{1}{2}}\\
    &+C_{F,\psi}(|x-y|+\delta).
    \end{split}
\]
In the same way, the lower bound can be obtained by choosing for player I, a strategy $\sigma_{z,I}$ by means of (4.5).

Now, for $t_x\neq t_y$ and $y\in S_{\varepsilon}$, we have
\[
\begin{split}
    |u^{\varepsilon}(x,t_{x})-u^{\varepsilon}(y,t_{y})|&\leq  |u^{\varepsilon}(x,t_{x})-u^{\varepsilon}(y,t_{x})|+|u^{\varepsilon}(y,t_{x})-u^{\varepsilon}(y,t_{y})|\\
    &\leq C_{F,\psi,\delta}\min\{|x-y|+\varepsilon,\varepsilon^{2}N\}+C_{F,\psi,\delta}\left(\min\{|x-y|+\varepsilon,\varepsilon^{2}N\}\right)^{\frac{1}{2}}\\
    &+C_{F,\psi}(|x-y|+\delta)+C_{F}|t_{x}-t_{y}|^{\frac{1}{2}}.
\end{split}
\]
Hence, the proof is complete by recalling that $N = \lfloor \frac{2t_x}{\varepsilon^2} \rfloor$.
\end{proof}
Now we consider the case where the boundary point $y,t_y$ lies at the initial boundary strip
\begin{lem}
    Let the assumption of Lemma 4.3 be fulfilled. Then $u^{\varepsilon}$ satisfies the following
    \[|u^{\varepsilon}(x,t)-u^{\varepsilon}(y,t_{y})|\leq C\{|x-y|+t_{x}^{\frac{1}{2}}+\varepsilon\},
    \]
    for every $(x,t_{x})\in \Omega_{T}$ and $(y,t_{y})\in\Omega\times(-\frac{\varepsilon^{2}}{2},0]$.
\end{lem}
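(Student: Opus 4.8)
The plan is to exploit that $t_y\le 0$ places $(y,t_y)$ on the initial part of $\Gamma_p^\varepsilon$, so that by (3.1) $u^\varepsilon(y,t_y)=F(y,t_y)$; it then suffices to prove
\[
F(y,t_y)-C\bigl(|x-y|+t_x^{1/2}+\varepsilon\bigr)\ \le\ u^\varepsilon(x,t_x)\ \le\ F(y,t_y)+C\bigl(|x-y|+t_x^{1/2}+\varepsilon\bigr),
\]
which I would obtain by letting one player play a ``pull toward $y$'' strategy, in the spirit of the proof of Lemma 4.3 but with the fixed target $y\in\Omega$ instead of an exterior center $z$. Here and below $C$ depends only on $C_1,C_2$, $T$, the dimension and $\mathrm{diam}(\Omega)$, never on $\varepsilon$.

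\emph{Upper bound.} Fix for player II the strategy $\sigma_{y,II}$ that, from $x_k\in\Omega$, moves the token to $x_k+(\varepsilon-\varepsilon^3)\tfrac{y-x_k}{|y-x_k|}$ when $|x_k-y|\ge\varepsilon-\varepsilon^3$ and to $y$ otherwise (and keeps it fixed once $x_k\in S_\varepsilon$). By Theorem 3.3,
\[
u^\varepsilon(x,t_x)=u^\varepsilon_{II}(x,t_x)\le\sup_{\tau_N,\sigma_I}\,E^{x_0,N}_{\tau_N,\sigma_I,\sigma_{y,II}}\bigl[G(x_{\tau_N},t_{\tau_N})\bigr].
\]
Whatever the stopping time, $(x_{\tau_N},t_{\tau_N})\in\Gamma_p^\varepsilon\cup\Omega_T$, so $G$ there equals $F$ or $\psi$; since $(y,t_y)\in\Gamma_p^\varepsilon$ gives $\psi(y,t_y)\le F(y,t_y)$, (2.1)--(2.3) yield
\[
G(x_{\tau_N},t_{\tau_N})\le F(y,t_y)+C\bigl(|x_{\tau_N}-y|+|t_{\tau_N}-t_y|^{1/2}\bigr).
\]
Because $t_{\tau_N}=t_x-\tfrac{\varepsilon^2}{2}\tau_N\in(-\tfrac{\varepsilon^2}{2},t_x]$ and $t_y\in(-\tfrac{\varepsilon^2}{2},0]$, we have $|t_{\tau_N}-t_y|^{1/2}\le t_x^{1/2}+\varepsilon$ deterministically, and only $E[|x_{\tau_N}-y|]$ remains. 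Here I would argue with the smooth function $f(x)=|x-y|^2$: splitting the tug and averaging parts of $\pi_k$ exactly as in Lemma 4.3 gives $E\bigl[|x_k-y|^2\mid x_0,\dots,x_{k-1}\bigr]\le|x_{k-1}-y|^2+C\varepsilon^2$, so $M_k=|x_k-y|^2-C\varepsilon^2k$ is a supermartingale; optional stopping plus $\tau_N\le N<\tfrac{2t_x}{\varepsilon^2}+1$ give $E[|x_{\tau_N}-y|^2]\le|x-y|^2+C(t_x+\varepsilon^2)$, and Jensen's inequality converts this into $E[|x_{\tau_N}-y|]\le|x-y|+C(t_x^{1/2}+\varepsilon)$. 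Taking the supremum over $\tau_N,\sigma_I$ gives the upper bound.

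\emph{Lower bound.} This is symmetric: player I plays the same pull-toward-$y$ strategy $\sigma_{y,I}$ and never uses the stopping option, i.e.\ $\tau_N=\tau^0_N$. Then $(x_{\tau^0_N},t_{\tau^0_N})\in\Gamma_p^\varepsilon$ in every run (either $x_{\tau^0_N}\in S_\varepsilon$, or $\tau^0_N=N$ so that $t_{\tau^0_N}\in(-\tfrac{\varepsilon^2}{2},0]$), hence $G=F$ there and $F(x_{\tau^0_N},t_{\tau^0_N})\ge F(y,t_y)-C\bigl(|x_{\tau^0_N}-y|+t_x^{1/2}+\varepsilon\bigr)$. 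Since $u^\varepsilon(x,t_x)=u^\varepsilon_I(x,t_x)\ge\inf_{\sigma_{II}}E^{x_0,N}_{\tau^0_N,\sigma_{y,I},\sigma_{II}}[F(x_{\tau^0_N},t_{\tau^0_N})]$ and the same supermartingale controls $E[|x_{\tau^0_N}-y|]$ uniformly in $\sigma_{II}$, the lower bound follows; together with $u^\varepsilon(y,t_y)=F(y,t_y)$ this proves the lemma.

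\emph{Main obstacle.} The only genuine difference from Lemma 4.3 is the supermartingale estimate near $y$: since $y\in\Omega$, the hypothesis $|x_{k-1}-y|>\varepsilon-\varepsilon^3$ that made the averaging term clean in Lemma 4.3 can fail, and working with $|x-y|$ itself would create a boundary layer around $y$. Replacing it by the globally smooth $|x-y|^2$ removes this, at the cost of the single Jensen step above; the point that then needs care is that the drift constant is truly $\varepsilon$-independent — it only involves $\alpha\varepsilon^3\,\mathrm{diam}(\Omega)\le C\varepsilon^2$ and $(\alpha+\beta)\varepsilon^2$ — so that the final estimate has constants depending only on the data.
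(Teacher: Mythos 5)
Your proposal is correct and follows essentially the same route as the paper: for each bound you fix a pull-toward-$y$ strategy for the appropriate player, use the game-value characterization (2.8)--(2.9)/Theorem 3.3 together with the Lipschitz bounds (2.1)--(2.3) on $G$ and $F$, and control $E[|x_{\tau_N}-y|]$ via the supermartingale $M_k=|x_k-y|^2-C\varepsilon^2 k$, Doob's optional stopping, Jensen's inequality and $\tau_N\le N\le 2t_x/\varepsilon^2+1$, which is exactly the paper's argument. Your explicit remark on why one works with $|x-y|^2$ rather than $|x-y|$ (since $y\in\Omega$ has no exterior-sphere structure) is a point the paper makes only implicitly, but it does not change the substance of the proof.
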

\begin{proof}
    Set $x_0=x$ and $N = \lfloor \frac{2t_x}{\varepsilon^2} \rfloor$. We begin by fixing a strategy $\sigma_{y,I}$ for player I to pull toward $y$. Therefore, similar to (4.4), we have
\begin{equation}
    u^{\varepsilon}(x,t_{x})-u^{\varepsilon}(y,t_y)\geq -C_{F}\underset{\tau_{N},\sigma_{II}}{\sup}~E^{x_{0},N}_{\tau_{N},\sigma_{y,I},\sigma_{II}}[|x_{\tau_{N}}-y|+\varepsilon(\frac{\tau_{N}}{2})^{\frac{1}{2}}].
\end{equation}
Next, we have that $M_{k}=|x_{k}-y|^{2}-Ck\varepsilon^{2}$ is supermartingale. Indeed, we have
\[\begin{split}
 E^{x_{0},N}_{\tau_{N},\sigma_{y,I},\sigma_{II}}[|x_{k}-y|^{2}|~x_{0},..,x_{k-1}]\leq&\frac{\alpha}{2}\{(|x_{k}-y|+\varepsilon)^{2}+(|x_{k}-y|-\varepsilon)^{2}\} \\
 &+\beta\fint_{B_{\varepsilon}(x_{k-1})}|x-y|^{2}~dy\leq |x_{k-1}-y|^{2}+C\varepsilon^{2}.
\end{split}
\]
Therefore, by Doob's theorem, we get
\begin{equation}
E^{x_{0},N}_{\tau_{N},\sigma_{y,I},\sigma_{II}}[|x_{\tau_{N}}-y|^{2}]-C\varepsilon^{2}E^{x_{0},N}_{\tau_{N},\sigma_{y,I},\sigma_{II}}[\tau_{N}]\leq |x_{0}-y|^{2}.
\end{equation}
By Jenen's inequality, we get
\[\begin{split}
E^{x_{0},N}_{\tau_{N},\sigma_{y,I},\sigma_{II}}[|x_{\tau_{N}}-y|]\leq&E^{x_{0},N}_{\tau_{N},\sigma_{y,I},\sigma_{II}}[|x_{\tau_{N}}-y|^{2}]^{\frac{1}{2}}\leq |x_{0}-y|+C(t_{x}^{\frac{1}{2}}+\varepsilon),
\end{split}
\]
where we have used the fact that $\tau_{N}\leq N= \lfloor \frac{2t_x}{\varepsilon^2} \rfloor$. Therefore, (4.8) becomes
\begin{equation}
\begin{split}
    u^{\varepsilon}(x,t_{x})-u^{\varepsilon}(y,t_y)&\geq-C_{F}|x-y|-C_{F}(t_{x}^{\frac{1}{2}}+\varepsilon)-C_{F}\varepsilon E^{x_{0},N}_{\tau_{N},\sigma_{y,I},\sigma_{II}}[\tau_{N}^{\frac{1}{2}}]\\
    &\geq-C_{F}\{|x-y|+t_{x}^{\frac{1}{2}}+\varepsilon\}.
\end{split}
\end{equation}
Similarly, by using (4.3) and fixing a strategy $\sigma_{y,II}$ for player II
to pull toward $y$, we get the upper bound
\begin{equation}
    u^{\varepsilon}(x,t_{x})-u^{\varepsilon}(y,t_{y})\leq C_{F,\psi}\{|x-y|+t_{x}^{\frac{1}{2}}+\varepsilon\}.
\end{equation}
Hence, from (4.10) and (4.11), we get the desired result.
\end{proof}
Combining the boundary and interior estimates, we can now verify that the discrete value functions satisfy the uniform continuity and boundedness assumptions required for the compactness argument.
\begin{lem}
     Let the assumption of Lemma 4.3 be fulfilled. Then $u^{\varepsilon}$, the solution to (3.1), satisfies the conditions of Lemma 4.1.
\end{lem}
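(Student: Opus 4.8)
The plan is to verify, for the family $\{u^{\varepsilon}\}$ restricted to $\overline{\Omega_T}$, the two hypotheses of Lemma 4.1: a uniform bound, and an asymptotic uniform modulus of continuity. The bound I would read off directly from the DPP. Set $C:=\max\{\sup_{\Gamma_{p}^{\varepsilon}}|F|,\ \sup|\psi|\}$ and work with the iterates $u_{k}=\mathcal{T}u_{k}$ from the proof of Theorem 3.1. An induction over the time slabs gives $-C\le u_{k}\le C$ everywhere: on $\Gamma_{p}^{\varepsilon}$ one has $u_{k}=F$, while inside $\Omega_{T}$ the quantity $\mathcal{T}v(x,t)$ is the maximum of $\psi(x,t)$ with the weighted mean $\tfrac{\alpha}{2}(\sup_{B_{\varepsilon}(x)}+\inf_{B_{\varepsilon}(x)})+\beta\,(\text{mean over }B_{\varepsilon}(x))$ of $v$ on the previous slab, which lies in $[-C,C]$ as soon as $\|v\|_{\infty}\le C$ because $\alpha+\beta=1$. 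Passing to the limit gives $\|u^{\varepsilon}\|_{\infty}\le C$ uniformly in $\varepsilon$, which is the first bullet of Lemma 4.1 (equivalently, $|G|\le C$, so the game values (2.8)--(2.9) are bounded by $C$).

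For the modulus, fix $\eta>0$; I must produce $r_{0},\varepsilon_{0}>0$ with $|u^{\varepsilon}(x,t_{x})-u^{\varepsilon}(y,t_{y})|<\eta$ whenever $\varepsilon<\varepsilon_{0}$ and $|x-y|+|t_{x}-t_{y}|<r_{0}$. I would first note that the radius $\delta$ in Lemma 4.3 may be taken as small as desired, since an exterior ball tangent at a boundary point can always be shrunk to a smaller tangent ball still lying in $\mathbb{R}^{N}\setminus\Omega$; so fix $\delta$ so small that every term $C_{F,\psi}\delta$ occurring below is $<\eta/4$, and restrict to $\varepsilon<\delta/3$ so that Lemma 4.3 applies. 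If one of the two points lies on the parabolic boundary of $\overline{\Omega_T}$ — say $y\in\partial\Omega$, where $u^{\varepsilon}(y,t_{y})=F(y,t_{y})$ since $\partial\Omega\subset S_{\varepsilon}$, or $t_{y}=0$ — then Lemma 4.3 (combined with the Lipschitz bound on $F$ to pass from $y$ to a genuine point of $S_{\varepsilon}$) or Lemma 4.5 gives a bound whose right-hand side tends to $0$ as $|x-y|+|t_{x}-t_{y}|,\ \varepsilon,\ \delta\to0$, hence is $<\eta$ for $r_{0},\varepsilon_{0}$ small enough.

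The remaining case is two interior points, and here it suffices (triangle inequality) to estimate $|u^{\varepsilon}(x,t)-u^{\varepsilon}(y,t)|$ at a common time level and $|u^{\varepsilon}(x,t_{x})-u^{\varepsilon}(x,t_{y})|$ at a common space point. For the former I would couple two runs of the game, started at $(x,t)$ and $(y,t)$, by a translation: take $(\tau,\sigma_{I})$ near-optimal for the $(x,t)$-game, let player I in the $(y,t)$-game play the $(y-x)$-translate of $\sigma_{I}$ and of its stopping rule, and make the coins, the uniform noise, and (translated) player II's moves common to both runs, so that $x_{k}-y_{k}\equiv x-y$ and both tokens stay in $H$ while both lie in $\Omega$. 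Stop the coupling at the first step $\sigma$ at which either token enters $S_{\varepsilon}$, or at $\sigma=N$; from $\sigma$ on, player I in the surviving game switches to a near-optimal strategy. At $\sigma$ the two tokens sit at the same time level, within $|x-y|$ of each other, and — using $\Omega\times(-\tfrac{\varepsilon^{2}}{2},0]\subset\Gamma_{p}^{\varepsilon}$ in the terminal case — at least one lies in $\Gamma_{p}^{\varepsilon}$. Exactly as in the proofs of Theorem 3.3 and Lemma 4.3, one then combines the strong Markov property and Doob's optional stopping theorem with: the Lipschitz modulus of $\psi$ (on the event player I stopped voluntarily before $\sigma$), the Lipschitz modulus of $F$ and the sign condition $\psi\le F$ (on $\{\sigma=N\}$ or when both tokens exit together), and Lemma 4.3 applied to the surviving interior token against the $S_{\varepsilon}$-exit point of the other, together with $\|u^{\varepsilon}\|_{\infty}\le C$ (on the remaining event). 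This bounds $|u^{\varepsilon}(x,t)-u^{\varepsilon}(y,t)|$ by a quantity of the form $C(|x-y|+|x-y|^{1/2}+\varepsilon^{1/2}+\delta)$, hence $<\eta$ for $|x-y|,\varepsilon$ small. The time estimate then reduces to the spatial one: running the $(x,t_{x})$-game for the $N(t_{x})-N(t_{y})$ steps separating the two levels, the supermartingale $|x_{k}-x|^{2}-Ck\varepsilon^{2}$ used in Lemma 4.5 gives $E[|x_{k}-x|]\le C(|t_{x}-t_{y}|^{1/2}+\varepsilon)$; feeding this (via Jensen) into the comparison with the $(x,t_{y})$-game and absorbing the $O(\varepsilon^{2})$ mismatch of the two time grids bounds $|u^{\varepsilon}(x,t_{x})-u^{\varepsilon}(x,t_{y})|$ by a vanishing quantity.

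I expect the main obstacle to be precisely this interior translated-coupling step: one must check that the translated strategies and stopping rules remain admissible (legal moves inside $B_{\varepsilon}$, Borel measurability via the Lusin selection used in Theorem 3.3) and, because the two coupled runs need not terminate at the same step, reconcile them after the coupling time $\sigma$ through the optional stopping theorem — and this is exactly where Lemmas 4.3--4.5, the uniform bound on $u^{\varepsilon}$, and the Lipschitz continuity of $F$ and $\psi$ together with $\psi\le F$ must be orchestrated. Once the uniform spatial and temporal moduli are established, the two bullets of Lemma 4.1 hold, completing the proof.
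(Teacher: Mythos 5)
Your uniform bound and your treatment of the boundary cases (both points in the strip; one interior point against a lateral or initial strip point via Lemmas 4.3 and 4.4, with $\delta$ fixed first and then $r_0,\varepsilon_0$) coincide with the paper's argument. The divergence is in the interior case, and there the paper's route is much shorter than yours: for $(x,t_x),(y,t_y)$ well inside the cylinder it sets $\tilde F(z,t_z)=u^{\varepsilon}(z-x+y,\,t_z-t_x+t_y)+\eta$ and $\tilde\psi(z,t_z)=\psi(z-x+y,\,t_z-t_x+t_y)+\eta$ on a shrunk cylinder $\widetilde{\Omega_T}$ whose boundary layer $\widetilde\Gamma$ is already controlled by the near-boundary estimates; by the uniqueness in Theorem 3.1 the space--time translate of $u^{\varepsilon}$ (plus $\eta$) is the DPP solution for these data, and the comparison principle of Corollary 3.2 then gives $u^{\varepsilon}(x,t_x)\le u^{\varepsilon}(y,t_y)+\eta$ in one stroke. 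Because the translation is simultaneously in space and time, no separate temporal estimate is needed; this is precisely what Corollary 3.2 was proved for, and your proposal never uses it.

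Your alternative — a translated coupling of two game runs — is a legitimate idea, but as written the decisive steps are missing, and you have flagged them yourself without resolving them. (i) After the decoupling time $\sigma$ you invoke ``the strong Markov property'' to replace the conditional continuation payoff of the surviving run by $u^{\varepsilon}$ at its current space--time position, uniformly over the adversary's strategies; this is a conditional version of Theorem 3.3 requiring measurable Markov near-optimal selections and an Ionescu--Tulcea construction for the \emph{coupled} process, none of which is available in the paper or supplied by you. (ii) The two asymmetric decoupling events need different continuations: when the $y$-run survives you may let player I switch to a near-maximizing strategy, but when the $x$-run survives it is player II's continuation that must be specified (e.g.\ switching to the near-minimizing strategy $\sigma_{II}^{0}$ of Theorem 3.3) in order to bound the continuation \emph{from above} by $u^{\varepsilon}(x_\sigma,t_\sigma)+\eta$; your sketch only addresses the first event. (iii) Your separate time-increment estimate must also handle the possibility that player I stops at the obstacle during the extra $N(t_x)-N(t_y)$ steps, where the payoff is $\psi$ at an intermediate point rather than a value of $u^{\varepsilon}$; the Lipschitz bound on $\psi$ together with $u^{\varepsilon}\ge\psi$ should close this, but it is not carried out. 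Until (i)--(iii) are done, the interior case — the heart of the lemma — remains a gap; the paper's translation-plus-comparison argument avoids all of it.
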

\begin{proof}
Since $F$ and $\psi$ are Borel bounded, then by construction, we have
\[
|u^{\varepsilon}|\leq M=\max\{\underset{\Omega_{T}}{\sup}\psi,~\underset{\Gamma_{p}^{\varepsilon}}{\sup}F\},
\]
and therefore, $u^{\varepsilon}$ is uniformly bounded.

Next, we are going to handle the second condition of Lemma 4.1 in several cases:\\
For the first case, if $(x,t),~(y,s)\in \Gamma_{p}^{\varepsilon}$, then the values are controlled directly by the boundary data $F$. Then, we can choose $r_0$ small enough such that $|x-y|+|t-s|^{\frac{1}{2}}<r_0$ which implies, by using (2.1) and the fact tha $u^{\varepsilon}=F$ on $\Gamma_{p}^{\varepsilon}$, that
\begin{equation}
    |u^{\varepsilon}(x,t)-u^{\varepsilon}(y,s)|=|F(x,t)-F(y,t)|\leq \eta.
\end{equation}
For the second case, if $(x,t)\in \Omega_{T}$ and $(y,s)\in S_{\varepsilon}\times(-\frac{\varepsilon^{2}}{2},T]$ or $(y,s)\in \Omega\times(-\frac{\varepsilon^{2}}{2},0]$. By taking $\varepsilon_{0},~r_{0}$, and $\delta$ small enough, we can easily deduce from Lemmas 4.3 and 4.4 that
\begin{equation}
    |u^{\varepsilon}(x,t)-u^{\varepsilon}(y,s)|\leq \eta~~~~\text{for}~~|x-y|+|t-s|^{\frac{1}{2}}<r_0.
\end{equation}
 For the last case, if both points are inside  $\Omega_{T}$. We begin by defining the following
\begin{gather*}
d_{p}((z,t),\partial\Omega_{T})
   = \inf\{|z-\omega|^{\frac{1}{2}} + |t-s|^{\frac{1}{2}};\; (\omega,s)\in\partial\Omega_{T}\},\\[4pt]
\widetilde{\Omega_{T}}
   = \{(z,t)\in \Omega_{T}:\; d_{p}((z,t),\partial_{p}\Omega_{T}) > \tfrac{r_{0}}{6}\},\\[4pt]
\widetilde{\Gamma}
   = \{(z,t)\in \overline{\Omega_{T}}:\; d_{p}((z,t),\partial_{p}\Omega_{T}) \leq \tfrac{r_{0}}{3}\}.
\end{gather*}
Next, we assume that $|x-y|+|t_{x}-t_{y}|<\frac{r_0}{3}$. If  $(x,t),~(y,s)\in \widetilde{\Gamma}$, then by the definition of $\widetilde{\Gamma}$ there exist parabolic boundary points $(x_b,t_b)$ and $(y_b,s_b)$ such that
\[
d_{p}((x,t),(x_b,t_b)))\leq \frac{r_0}{3},~\text{and}~~d_{p}((x,t),(y_b,s_b)))\leq \frac{r_0}{3}.
\]
Now, we use the triangle inequality to get
\begin{equation}
    \begin{split}
        |u^{\varepsilon}(x,t)-u^{\varepsilon}(y,s)|\leq& |u^{\varepsilon}(x,t)-u^{\varepsilon}(x_{b},t_{b})|+ |u^{\varepsilon}(x_b,t_b)-u^{\varepsilon}(y_{b},s_{b})|\\
        &+|u^{\varepsilon}(y_b,s_b)-u^{\varepsilon}(y,s)|\leq \eta,
    \end{split}
\end{equation}
where we used the estimates in the previous two cases.\\
Next, If $(x,t_x), (y,t_y) \in \widetilde{\Omega_T}$, we may, without loss of generality, assume that  $t_x > t_y$.  
We then define the bounded Borel function $\tilde{F}: \widetilde{\Omega_T} \longrightarrow \mathbb{R}$ together with the Lipschitz continuous obstacle  $\tilde{\psi}:\mathbb{R}^{N+1}\longrightarrow\mathbb{R}$ as follows.
\begin{align*}
    \tilde{F}(z,t_z)=u^{\varepsilon}(z-x+y,t_z-t_x+t_y)+\eta,\\
    \tilde{\psi}(z,t_z)=\psi^{\varepsilon}(z-x+y,t_z-t_x+t_y)+\eta,
\end{align*}
for all $(z,t_{z})\in\tilde{\Gamma}$. We have by construction that $\tilde{F}\geq \tilde{\psi}$ in $\widetilde{\Omega_{T}}$. Therefore, by Theorem 3.3, there exists a solution $\tilde{u}^{\varepsilon}$ to (3.1) subject to the boundary data $\tilde{F}$ on $\widetilde{\Omega_{T}}$ and the obstacle constraint $\tilde{\psi}$, which by the uniqueness of such solution, is
\[
\tilde{u}^{\varepsilon}=u^{\varepsilon}(z-x+y,t_z-t_x+t_y)+\eta.
\]
Then, similar to (4.14), we have
\[
u^{\varepsilon}(z,t_z)\leq u^{\varepsilon}(z-x+y,t_z-t_x+t_y)+\eta=\tilde{F}(z,t_z).
\]
Similarly, by using (2.2), we get that $\tilde{\psi}\geq \psi$ in $\widetilde{\Omega_{T}}$. Consequently, by using Corollary 3.2, we get that $\tilde{u}^{\varepsilon}\geq u^{\varepsilon}$ in $\widetilde{\Omega_{T}}$ and we get
\[u^{\varepsilon}(x,t_x)\leq \tilde{u}^{\varepsilon}(x,t_x)=u^{\varepsilon}(y,t_y)+\eta.
\]
The lower bound follows similarly.
\end{proof}
The uniform estimates established in the previous lemmas allow us to apply the Ascoli-Arzelà theorem, yielding a subsequence of $u^\varepsilon$ that converges uniformly to a continuous limit $u$ as $\varepsilon \to 0$. We now verify that this limit $u$ is the unique viscosity solution of the parabolic obstacle problem in the sense of the following definition.
\begin{defn}
A continuous function $u:\overline{\Omega_T}\to\mathbb{R}$ is a \emph{viscosity solution} of the obstacle problem
\[
\min\Big\{(n+p)u_t -\big[(p-2)\Delta_\infty u+\Delta u\big],\; u-\psi\Big\}=0\quad\text{in }\Omega_T,
\]
with $u=F$ on the parabolic boundary $\partial_p\Omega_T$ and $u\ge\psi$ in $\Omega_T$, if the following hold.

Let $(x_0,t_0)\in\Omega_T$ and let $\varphi\in C^{2,1}$ be a test function.

\begin{enumerate}[label=(\roman*)]
  \item If $\varphi$ \emph{touches $u$ from below} at $(x_0,t_0)$ (i.e.\ $u-\varphi$ has a local minimum at $(x_0,t_0)$), then
  \begin{equation}
  \begin{cases}
  (n+p)\varphi_t(x_0,t_0)\ge (p-2)\Delta_\infty\varphi(x_0,t_0)+\Delta\varphi(x_0,t_0), 
    & \text{if }\nabla\varphi(x_0,t_0)\neq 0,\\[6pt]
  (n+p)\varphi_t(x_0,t_0)\ge (p-2)\,\lambda_{\min}\!\big(D^2\varphi(x_0,t_0)\big)+\Delta\varphi(x_0,t_0),
    & \text{if }\nabla\varphi(x_0,t_0)=0.
  \end{cases}
   \end{equation}

  \item If $\varphi$ \emph{touches $u$ from above} at $(x_0,t_0)$ (i.e.\ $u-\varphi$ has a local maximum at $(x_0,t_0)$), then either
  \begin{equation}
  u(x_0,t_0)=\psi(x_0,t_0)\quad\text{(contact with the obstacle)},
   \end{equation}
  or the following viscosity inequalities hold:
  \begin{equation}
  \begin{cases}
  (n+p)\varphi_t(x_0,t_0)\le (p-2)\Delta_\infty\varphi(x_0,t_0)+\Delta\varphi(x_0,t_0), 
    & \text{if }\nabla\varphi(x_0,t_0)\neq 0,\\[6pt]
  (n+p)\varphi_t(x_0,t_0)\le (p-2)\,\lambda_{\max}\!\big(D^2\varphi(x_0,t_0)\big)+\Delta\varphi(x_0,t_0),
    & \text{if }\nabla\varphi(x_0,t_0)=0.
  \end{cases}
   \end{equation}
\end{enumerate}

Here $\Delta_\infty\varphi:=\langle D^2\varphi\,\nabla\varphi,\nabla\varphi\rangle/|\nabla\varphi|^2$ (when $\nabla\varphi\neq0$), and $\lambda_{\min}(D^2\varphi)$, $\lambda_{\max}(D^2\varphi)$ denote the smallest and largest eigenvalues of $D^2\varphi$, respectively.  (Equivalently one may write $(p-2)\lambda_{\min}(D^2\varphi)$ and $(p-2)\lambda_{\max}(D^2\varphi)$ in the degenerate case.)
\end{defn}
\noindent This definition specifies the viscosity (obstacle) solution concept for the parabolic normalized $p$-Laplacian obstacle problem. The following convergence theorem constitutes our main result.
\begin{thm}
    Assume that $\Omega$ satisfies the exterior sphere condition and $F$ and $\psi$ satisfy (2.1)-(2.3) respectively. Let $u^{\varepsilon}$ denotes the unique function solving (3.1). Then, the uniform limit $u=\underset{\varepsilon\rightarrow0}{\lim} u^{\varepsilon}$ is a viscosity solution to the parabolic obstacle problem (1.1).
\end{thm}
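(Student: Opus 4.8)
The plan is to show that the uniform limit $u$ satisfies the two viscosity requirements in Definition~4.6, using the DPP~\eqref{eq:DPP} as the discrete analogue of the PDE together with a Taylor-expansion (asymptotic mean-value) argument. Since $u^\varepsilon = F$ on $\Gamma_p^\varepsilon$ and $u^\varepsilon \ge \psi$ by the $\max$ in~\eqref{eq:DPP}, the boundary condition $u = F$ on $\partial_p\Omega_T$ and the constraint $u \ge \psi$ pass to the uniform limit immediately; so the work is entirely at interior points $(x_0,t_0)\in\Omega_T$.

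First I would record the key local expansion. For $\varphi\in C^{2,1}$ with $\nabla\varphi(x_0,t_0)\neq 0$, a second-order Taylor expansion in space at fixed time, combined with the standard cancellation of odd terms, gives
\[
\frac{\alpha}{2}\Big(\sup_{B_\varepsilon(x)}\varphi(\cdot,s) + \inf_{B_\varepsilon(x)}\varphi(\cdot,s)\Big) + \beta\fint_{B_\varepsilon(x)}\varphi(y,s)\,dy
= \varphi(x,s) + \frac{\varepsilon^2}{2(n+2)}\Big[(p-2)\Delta_\infty\varphi + \Delta\varphi\Big](x,s) + o(\varepsilon^2),
\]
where I have used $\alpha = \frac{p-2}{p+n}$, $\beta = \frac{n+2}{p+n}$ so that $\frac{\alpha}{2}\cdot 2 \cdot \frac{1}{2} + \beta\cdot\frac{1}{2(n+2)} = \frac{1}{2(n+2)}\cdot\frac{p-2+n+2}{p+n}\cdot(\text{normalization})$; the precise constant is the one making $(n+p)u_t = (p-2)\Delta_\infty u + \Delta u$ appear. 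Then inserting the time increment $s = t - \frac{\varepsilon^2}{2}$ produces $\varphi(x,t) - \frac{\varepsilon^2}{2}\varphi_t(x,t) + o(\varepsilon^2)$, so the whole averaging-plus-time-shift operator applied to $\varphi$ equals $\varphi(x,t) + \frac{\varepsilon^2}{2(n+p)}\big[(p-2)\Delta_\infty\varphi + \Delta\varphi - (n+p)\varphi_t\big](x,t) + o(\varepsilon^2)$ after clearing constants. In the degenerate case $\nabla\varphi(x_0,t_0)=0$ one replaces $\sup/\inf$ over $B_\varepsilon$ by the bounds coming from $\lambda_{\max}/\lambda_{\min}$ of $D^2\varphi$, which is exactly why Definition~4.6 uses those eigenvalues there.

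Next, the subsolution part (case (ii), $\varphi$ touching $u$ from above). Suppose $u(x_0,t_0) > \psi(x_0,t_0)$; I must derive the viscosity inequality. By uniform convergence and the usual perturbation trick (adding a strict quadratic so the max becomes strict, then using that $u^\varepsilon \to u$ uniformly) there are points $(x_\varepsilon,t_\varepsilon) \to (x_0,t_0)$ at which $u^\varepsilon - \varphi$ has a local max; since $u(x_0,t_0) > \psi(x_0,t_0)$ and everything is continuous, for small $\varepsilon$ the $\max$ in~\eqref{eq:DPP} is \emph{not} attained by the $\psi$ branch at $(x_\varepsilon,t_\varepsilon)$, so there the DPP reads $u^\varepsilon(x_\varepsilon,t_\varepsilon) = \frac{\alpha}{2}(\sup + \inf)u^\varepsilon(\cdot, t_\varepsilon - \tfrac{\varepsilon^2}{2}) + \beta\fint u^\varepsilon(\cdot, t_\varepsilon-\tfrac{\varepsilon^2}{2})$. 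Using $u^\varepsilon \le \varphi + (u^\varepsilon-\varphi)(x_\varepsilon,t_\varepsilon)$ near the max point, I replace $u^\varepsilon$ by $\varphi$ inside the operator, apply the expansion above, divide by $\varepsilon^2$, and let $\varepsilon \to 0$ to get $(n+p)\varphi_t \le (p-2)\Delta_\infty\varphi + \Delta\varphi$ at $(x_0,t_0)$ (resp.\ the $\lambda_{\max}$ version if $\nabla\varphi(x_0,t_0)=0$). The supersolution part (case (i), $\varphi$ touching from below) is symmetric but simpler: since $u^\varepsilon \ge$ the averaging operator applied to $u^\varepsilon$ (dropping the $\psi$ branch only \emph{increases} the right side, so the identity $u^\varepsilon \ge \frac{\alpha}{2}(\sup+\inf) + \beta\fint$ always holds), the same Taylor argument with inequalities reversed yields $(n+p)\varphi_t \ge (p-2)\Delta_\infty\varphi + \Delta\varphi$, with no obstacle dichotomy needed.

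The main obstacle I anticipate is handling the degenerate case $\nabla\varphi(x_0,t_0) = 0$ cleanly: when the gradient vanishes the quantity $\Delta_\infty\varphi$ is undefined and the $\sup$/$\inf$ over $B_\varepsilon$ of the quadratic part of $\varphi$ must be controlled by $\lambda_{\max}(D^2\varphi)$ and $\lambda_{\min}(D^2\varphi)$ respectively, with error terms that are $o(\varepsilon^2)$ \emph{uniformly}; one must also rule out, or accommodate, the possibility that $\nabla\varphi(x_\varepsilon,t_\varepsilon)$ is nonzero but tiny (the standard fix is to note that in that regime $\Delta_\infty\varphi(x_\varepsilon,t_\varepsilon)$ is pinched between $\lambda_{\min}$ and $\lambda_{\max}$ of $D^2\varphi(x_\varepsilon,t_\varepsilon)$, which converge to those of $D^2\varphi(x_0,t_0)$). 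A secondary technical point is justifying that the maximum/minimum points $(x_\varepsilon,t_\varepsilon)$ of $u^\varepsilon - \varphi$ stay in the interior and converge to $(x_0,t_0)$; this follows from the uniform convergence plus strictness of the extremum after the quadratic perturbation, exactly as in~\cite{man1,man3}. Finally, uniqueness of the viscosity solution — needed to conclude the full sequence $u^\varepsilon$ converges — follows from a comparison principle for~(1.1); I would either cite it from the parabolic normalized $p$-Laplacian literature or sketch the standard doubling-of-variables argument adapted to the obstacle $\min$-structure.
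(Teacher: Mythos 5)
Your proposal is correct and follows essentially the same route as the paper: the boundary data and the constraint $u\ge\psi$ pass to the uniform limit, the DPP is transferred to a test function touching $u$ via the asymptotic mean-value (Taylor) expansion with the weights $\alpha,\beta$ producing $(n+p)\varphi_t-[(p-2)\Delta_\infty\varphi+\Delta\varphi]$, the obstacle dichotomy is invoked only for test functions touching from above, and the degenerate case $\nabla\varphi=0$ is handled through $\lambda_{\max}(D^2\varphi)$ and $\lambda_{\min}(D^2\varphi)$. The only differences are cosmetic: the paper keeps the contact alternative at the discrete level (either $u^{\varepsilon}=\psi$ at the approximating points, hence $u=\psi$ in the limit, or the averaging identity holds) rather than assuming $u(x_0,t_0)>\psi(x_0,t_0)$ outright, and the comparison/uniqueness issue you mention at the end is deferred in the paper to the separate Lemma 4.8 rather than being part of this theorem.
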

\begin{proof}
First, we can easily deduce that 
\[
u=F~~~\text{in}~~\partial\Omega_{T},~~~~\text{and}~~~ u\geq\psi~~\text{in}~~\Omega_{T}.
\]
Next, we choose a point $(x,t)\in\Omega_{T},~\varepsilon>0,~s\in(t-\varepsilon^{2},t)$ and any smooth function $\phi\in C^{2,1}$. Also, let $x_{1}^{\varepsilon,t-\frac{\varepsilon^{2}}{2}}$ be a point in which $\phi$ attains its maximum over a ball $\overline{B}_{\varepsilon}(x)$ at time $s$, that is
\[
\phi(x_{1}^{\varepsilon,t-\frac{\varepsilon^{2}}{2}},t-\frac{\varepsilon^{2}}{2})=\underset{y\in \overline{B}_{\varepsilon}(x)}{\max}\phi(y,t-\frac{\varepsilon^{2}}{2}).
\]
Then, from the proof of Theorem 19 in \cite{man3}, we have
\begin{equation}\begin{split}
    \frac{\alpha}{2}&\{\underset{y\in B_{\varepsilon}(x)}{\max}\phi(y,t-\frac{\varepsilon^{2}}{2})+\underset{y\in B_{\varepsilon}(x)}{\min}\phi(y,t-\frac{\varepsilon^{2}}{2})\}+\beta\fint_{B_{\varepsilon}(x)}\phi(y,t-\frac{\varepsilon^{2}}{2})~dy-\phi(x,t)\\
    \leq&\frac{\beta\varepsilon^{2}}{2(n+2)}\biggl( (p-2)<D^{2}\phi(x,t)\frac{x_{1}^{\varepsilon,t-\frac{\varepsilon^{2}}{2}}-x}{\varepsilon},\frac{x_{1}^{\varepsilon,t-\frac{\varepsilon^{2}}{2}}-x}{\varepsilon} >+\Delta\phi(x,t)\\
    &-(n+p)\phi_{t}(x,t)+o(\varepsilon^{2})\biggr).
\end{split}
\end{equation}
Assume further that $\phi$ touches $u$ from above at point $(x,t)$. Then, by the uniform convergence, there exists a sequence $(x_{\varepsilon},t_{\varepsilon})\rightarrow(x,t)$ and a constant $\eta_{\varepsilon}>0$ such that
\[
u^{\varepsilon}(y,s)-\phi(y,s)\leq u^\varepsilon(x_{\varepsilon},t_{\varepsilon})-\phi(x_{\varepsilon},t_{\varepsilon})+\eta_{\varepsilon}
\]
for all $(y,s)$ in the neighborhood of $(x_{\varepsilon},t_{\varepsilon})$. Define the shifted function
\[
\tilde{\phi}(y,s)=\phi(y,s)+[u^{\varepsilon}(x_{\varepsilon},t_{\varepsilon})-\phi(x_{\varepsilon},t_{\varepsilon})],
\]
then
\[
\tilde{\phi}(x_{\varepsilon},t_{\varepsilon})=u^{\varepsilon}(x_{\varepsilon},t_{\varepsilon})~~~\text{and}~~u^{\varepsilon}(y,s)\leq \tilde{\phi}(y,s)+\eta_{\varepsilon}.
\]
Then, $\tilde{\phi}$ serves as an upper test function for $u^{\varepsilon}$ up to a small error. Therefore, by using (3.1), we get
\begin{equation}
\begin{split}
\tilde{\phi}(x_{\varepsilon},t_{\varepsilon})
=& u^{\varepsilon}(x_{\varepsilon},t_{\varepsilon})= \max\Bigg\{
\psi(x_{\varepsilon},t_{\varepsilon}),\,
\frac{\alpha}{2}\Big(
\underset{y\in B_{\varepsilon}(x)}{\sup}\, u^{\varepsilon}\!\left(y,t_{\varepsilon}-\frac{\varepsilon^{2}}{2}\right)
\\
&+\underset{y\in B_{\varepsilon}(x)}{\inf}\, u^{\varepsilon}\!\left(y,t_{\varepsilon}-\frac{\varepsilon^{2}}{2}\right)
\Big)+\beta \fint_{B_{\varepsilon}(x)} u^{\varepsilon}\!\left(y,t_{\varepsilon}-\frac{\varepsilon^{2}}{2}\right) dy
\Bigg\}.
\end{split}
\end{equation}
Then, we have either 
\[
u^\varepsilon(x_{\varepsilon},t_{\varepsilon})=\psi(x_{\varepsilon},t_{\varepsilon}),
\]
which implies by using the uniform convergence and the continuity of $\psi,~\phi$, and $\tilde{\phi}$ that
\[
u(x,t)=\psi(x,t).
\]
Or, we end up with the following inequality
\begin{equation}
    \begin{split}
-\eta_{\varepsilon}\leq&-\tilde{\phi}(x_{\varepsilon},t_{\varepsilon})+\frac{\alpha}{2}\Big(
\underset{y\in B_{\varepsilon}(x)}{\sup}\, \tilde{\phi}\!\left(y,t_{\varepsilon}-\frac{\varepsilon^{2}}{2}\right)
\\
&+\underset{y\in B_{\varepsilon}(x)}{\inf}\, \tilde{\phi}\!\left(y,t_{\varepsilon}-\frac{\varepsilon^{2}}{2}\right)
\Big)+\beta \fint_{B_{\varepsilon}(x)} \tilde{\phi}\!\left(y,t_{\varepsilon}-\frac{\varepsilon^{2}}{2}\right) dy
\Bigg\}.     
    \end{split}
\end{equation}
Next, we have two cases to study. For the first case, we assume that $\nabla\phi(x,t)\neq 0$. Then, by choosing $\eta_{\varepsilon}=o(\varepsilon^{2})$, observing that $\nabla\tilde{\phi}=\nabla\phi,~D^{2}\tilde{\phi}=D^{2}\phi$, and $\tilde{\phi}_{t}=\phi$, and using (4.18), we obtain
\begin{equation}
    \begin{split}
        0\leq& \frac{\beta\varepsilon^{2}}{2(n+2)}\biggl(p-2)<D^{2}\phi(x_{\varepsilon},t_{\varepsilon})\frac{x_{1}^{\varepsilon,t_{\varepsilon}-\frac{\varepsilon^{2}}{2}}-x}{\varepsilon},\frac{x_{1}^{\varepsilon,t_{\varepsilon}-\frac{\varepsilon^{2}}{2}}-x}{\varepsilon}>+\Delta\phi(x_{\varepsilon},t_{\varepsilon})\\
        &-(n+p)\phi_{t}(x_{\varepsilon},t_{\varepsilon})\biggr)+2o(\varepsilon^{2}).
    \end{split}
\end{equation}
Next, we divide (4.21) by $\varepsilon^{2}$ and letting $\varepsilon\rightarrow0$, we get
\[
(n+p)\phi_{t}(x,t)\leq (p-2)\Delta_{\infty}\phi(x,t)+\Delta\phi(x,t).
\]
For the second case, we assume that $\nabla\phi(x,t)=0$. Thereafter, we set $s=t_{\varepsilon}-\frac{\varepsilon^{2}}{2}$, let $y\in B_{\varepsilon}(x_{\varepsilon})$, and we apply Taylor expansion to $\tilde{\phi}$ at $(x_{\varepsilon},t_{\varepsilon})$ such that
\[
\begin{split}
    \tilde{\phi}(y,s)=&\tilde{\phi}(x_{\varepsilon},t_{\varepsilon})+\nabla\phi(x_{\varepsilon},t_{\varepsilon}).(y-x_{\varepsilon})+\frac{1}{2}
    <D^{2}\phi(x_{\varepsilon},t_{\varepsilon})(y-x_{\varepsilon}),(y-x_{\varepsilon})>\\
    &+\phi_{t}(x_{\varepsilon},t_{\varepsilon})(s-t_{\varepsilon})+o(\varepsilon^{2}).
\end{split}
\]
Since $\nabla\phi(x,t)=0$ and $(x_{\varepsilon},t_{\varepsilon})\rightarrow(x,t)$, the linear term contributes only $o(\varepsilon^{2})$ and may be neglected in the limit. Then, the average term becomes
\begin{equation}
\fint_{B_{\varepsilon}(x_{\varepsilon})}\tilde{\phi}(y,s)~dy=\tilde{\phi}(x_{\varepsilon},t_{\varepsilon})+\frac{\varepsilon^{2}}{2(n+2)}\Delta\phi(x_{\varepsilon},t_{\varepsilon})-\frac{\varepsilon^{2}}{2}\phi_{t}(x_{\varepsilon},t_{\varepsilon})+o(\varepsilon^{2}).
\end{equation}
Afterwards, for the external terms, let
\[
M=\underset{y\in B_{\varepsilon}(x_{\varepsilon})}{\sup}~\tilde{\phi}(y,s),~~m=\underset{y\in B_{\varepsilon}(x_{\varepsilon})}{\inf}~\tilde{\phi}(y,s).
\]
Because the gradient vanishes, the extremal points of $\tilde{\phi}$ occur in the directions of the eigenvectors of $D^{2}\phi(x_{\varepsilon},t_{\varepsilon})$. A second-order Taylor expansion gives
\begin{align}
    M=\tilde{\phi}(x_{\varepsilon},t_{\varepsilon})+\frac{\varepsilon^{2}}{2}\lambda_{\max}(D^{2}\phi(x_{\varepsilon},t_{\varepsilon}))-\frac{\varepsilon^{2}}{2}\phi_{t}(x_{\varepsilon},t_{\varepsilon})+o(\varepsilon^{2}),\\
     m=\tilde{\phi}(x_{\varepsilon},t_{\varepsilon})+\frac{\varepsilon^{2}}{2}\lambda_{\min}(D^{2}\phi(x_{\varepsilon},t_{\varepsilon}))-\frac{\varepsilon^{2}}{2}\phi_{t}(x_{\varepsilon},t_{\varepsilon})+o(\varepsilon^{2}).
\end{align}
Adding (4.23) to (4.24) gives
\begin{equation}
M+m=2\tilde{\phi}(x_{\varepsilon},t_{\varepsilon})+\frac{\varepsilon^{2}}{2}\left(\lambda_{\max}(D^{2}\phi(x_{\varepsilon},t_{\varepsilon}))+\lambda_{\min}(D^{2}\phi(x_{\varepsilon},t_{\varepsilon}))\right)-\varepsilon^{2}\phi_{t}(x_{\varepsilon},t_{\varepsilon})+o(\varepsilon^{2}).
\end{equation}
Therefore, by putting (4.22) and (4.25) into (4.20), we arrive at
\begin{equation}
\begin{split}
    -\eta_{\varepsilon}\leq&\varepsilon^{2}\biggl[\frac{\alpha}{4}\left(\lambda_{\max}(D^{2}\phi(x_{\varepsilon},t_{\varepsilon}))+\lambda_{\min}(D^{2}\phi(x_{\varepsilon},t_{\varepsilon}))\right)\\
    &+\frac{\beta}{2(n+2)}\Delta\phi(x_{\varepsilon},t_{\varepsilon})-\frac{1}{2}\phi_{t}(x_{\varepsilon},t_{\varepsilon})\biggr]+o(\varepsilon^{2}).
\end{split}
\end{equation}
Divide by $\varepsilon^{2}$ and pass to the limit $\varepsilon\rightarrow0$, choosing $\eta_{\varepsilon}=o(\varepsilon^{2})$ and $(x_{\varepsilon},t_{\varepsilon})\rightarrow(x,t)$, to obtain
\begin{equation}
    0\leq \alpha\left(\lambda_{\max}(D^{2}\phi(x_{\varepsilon},t_{\varepsilon}))+\lambda_{\min}(D^{2}\phi(x_{\varepsilon},t_{\varepsilon}))\right)+\frac{2\beta}{n+2}\Delta\phi(x,t)-2\phi_{t}(x,t).
\end{equation}
Since $\alpha=\frac{p-2}{p+n}$ and $\beta=\frac{n+2}{p+n}$, (4.27) becomes
\[
(n+p)\phi_{t}(x,t)\leq \frac{p-2}{2}\left(\lambda_{\max}(D^{2}\phi(x_{\varepsilon},t_{\varepsilon}))+\lambda_{\min}(D^{2}\phi(x_{\varepsilon},t_{\varepsilon}))\right)+\Delta\phi(x,t).
\]
Hence, since \(p\geq2\) we have \(p-2\geq0\), and therefore for any symmetric matrix \(A=D^2\varphi\),
\[
\lambda_{\max}\big((p-2)A\big)=(p-2)\lambda_{\max}(A),\qquad
\lambda_{\min}\big((p-2)A\big)=(p-2)\lambda_{\min}(A).
\]

which implies that
\[
\lambda_{\max}((p-2)D^{2}\phi)\geq\frac{p-2}{2}\left(\lambda_{\max}(D^{2}\phi(x_{\varepsilon},t_{\varepsilon}))+\lambda_{\min}(D^{2}\phi(x_{\varepsilon},t_{\varepsilon}))\right),
\]
we have
\[
(n+p)\phi_{t}(x,t)\leq \lambda_{\max}((p-2)D^{2}\phi)+\Delta\phi(x,t).
\]

To verify the other half of Definition 4.6, we derive the reverse inequalities of (4.20) and (4.21) for all $u\geq\psi$ by considering the minimum point of the test function and choosing a test function $\phi$ that touches $u$ from below. The rest of the argument is analogous by  using the fact that
\[
\lambda_{\min}((p-2)D^{2}\phi)\leq \frac{p-2}{2}\left(\lambda_{\max}(D^{2}\phi(x_{\varepsilon},t_{\varepsilon}))+\lambda_{\min}(D^{2}\phi(x_{\varepsilon},t_{\varepsilon}))\right).
\] 
\end{proof}

Having established convergence of the discrete value functions to a continuous limit satisfying the viscosity inequalities, it remains to prove that this limit is unique.
\begin{lem}
    Let $u$ and $\overline{u}$ be two viscosity solutions to (1.1) in the sense of Definition 4.6. Then,
    \begin{equation}
        u=\overline{u}~~~~\text{in  }~\overline{\Omega_{T}}.
    \end{equation}
\end{lem}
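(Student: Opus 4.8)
The plan is to derive the lemma from a comparison principle. Every viscosity solution of (1.1) is, by Definition 4.6(i), a viscosity supersolution of the PDE (and it lies above $\psi$), and, by Definition 4.6(ii), it satisfies the PDE subsolution inequalities at every point where it strictly exceeds $\psi$. Hence it suffices to prove $u\le\overline{u}$ in $\overline{\Omega_T}$, using $u$ in the role of subsolution and $\overline{u}$ in the role of supersolution; the reverse inequality then follows by exchanging $u$ and $\overline{u}$ (there, using $\overline u\ge\psi$ at the relevant point). Suppose, for contradiction, that $M:=\max_{\overline{\Omega_T}}(u-\overline{u})>0$. Since $u=\overline{u}=F$ on $\partial_p\Omega_T$, this maximum is attained at an interior point $(x^\ast,t^\ast)$. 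To keep the maximum away from the terminal face $\{t=T\}$, where no data is prescribed, and to produce a strictly positive quantity to contradict, I would replace $u$ by $u_\sigma:=u-\tfrac{\sigma}{T-t}$ with $\sigma>0$ small: on $\partial_p\Omega_T$ one has $u_\sigma=\overline{u}-\tfrac{\sigma}{T-t}<\overline{u}$, while $u_\sigma\to-\infty$ as $t\to T$, and $M_\sigma:=\max_{\overline{\Omega_T}}(u_\sigma-\overline{u})\ge M-\tfrac{\sigma}{T-t^\ast}>0$ for $\sigma$ small, attained at some $(\bar x,\bar t)\in\Omega\times(0,T)$ with $\bar t$ bounded away from $T$. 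At that point $u(\bar x,\bar t)>\overline{u}(\bar x,\bar t)\ge\psi(\bar x,\bar t)$, so $u>\psi$ in a neighbourhood of $(\bar x,\bar t)$.

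Next I would run the doubling-of-variables argument, but with a \emph{quartic} penalization in space, which is the crucial modification: the normalized $p$-Laplacian is discontinuous in the gradient at $\nabla u=0$, and the usual quadratic doubling does not control that degenerate case. For $j\in\mathbb{N}$ set
\[
\Psi_j(x,t,y,s)=u(x,t)-\overline{u}(y,s)-\frac{j}{4}|x-y|^4-\frac{j}{2}|t-s|^2-\frac{\sigma}{T-t},
\]
whose supremum over $\overline{\Omega_T}\times\overline{\Omega_T}$ is attained (the last term forces $t<T$) at a point $(x_j,t_j,y_j,s_j)$. The standard penalization estimates give $j|x_j-y_j|^4\to0$, $j|t_j-s_j|^2\to0$, $|x_j-y_j|+|t_j-s_j|\to0$, and, along a subsequence, $(x_j,t_j)\to(\bar x,\bar t)$ and $(y_j,s_j)\to(\bar x,\bar t)$; so for $j$ large both base points lie in $\Omega_T$ and $u(x_j,t_j)>\psi(x_j,t_j)$. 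By the parabolic theorem on sums there are symmetric matrices $X_j,Y_j$ and reals $a_j,b_j$ — with common gradient slot $p_j:=j|x_j-y_j|^2(x_j-y_j)$ and with $a_j-b_j=\tfrac{\sigma}{(T-t_j)^2}>0$ — such that $(a_j,p_j,X_j)$ belongs to the closure of the parabolic superjet of $u$ at $(x_j,t_j)$, $(b_j,p_j,Y_j)$ to the closure of the parabolic subjet of $\overline{u}$ at $(y_j,s_j)$, and
\[
\begin{pmatrix}X_j & 0\\ 0 & -Y_j\end{pmatrix}\ \le\ A_j+A_j^2,\qquad A_j=j\begin{pmatrix}B_j & -B_j\\ -B_j & B_j\end{pmatrix},\quad B_j=|x_j-y_j|^2 I+2(x_j-y_j)\otimes(x_j-y_j)\ \ge\ 0 .
\]
Because $u$ is a subsolution and $u>\psi$ at $(x_j,t_j)$, Definition 4.6(ii) yields the PDE inequality there, and Definition 4.6(i) for $\overline{u}$ yields the opposite one at $(y_j,s_j)$, both with the same gradient slot $p_j$.

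Now I would distinguish two cases. If $x_j\neq y_j$, testing the matrix inequality against vectors of the form $(\xi,\xi)$ — which $A_j$, hence also $A_j^2$, annihilates — gives $X_j\le Y_j$; since for fixed $q\neq0$ and $p\ge2$ the map $X\mapsto(p-2)\langle Xq,q\rangle/|q|^2+\operatorname{tr}X=\operatorname{tr}\!\big((I+(p-2)\widehat{q}\otimes\widehat{q})X\big)$ is nondecreasing in $X$ (the matrix $I+(p-2)\widehat{q}\otimes\widehat{q}$ being positive semidefinite), subtracting the two viscosity inequalities and using $a_j-b_j=\tfrac{\sigma}{(T-t_j)^2}$ gives $(n+p)\tfrac{\sigma}{(T-t_j)^2}\le0$, a contradiction. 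If $x_j=y_j$, then $B_j=0$, so $A_j=0$ and the matrix inequality forces $X_j\le0\le Y_j$; moreover $p_j=0$, so the degenerate branches of Definition 4.6 apply and give $(n+p)a_j\le(p-2)\lambda_{\max}(X_j)+\operatorname{tr}X_j\le0$ and $(n+p)b_j\ge(p-2)\lambda_{\min}(Y_j)+\operatorname{tr}Y_j\ge0$ (here $p-2\ge0$ together with $X_j\le0\le Y_j$ is exactly what is used), so again $(n+p)(a_j-b_j)\le0$, a contradiction. Hence $M\le0$, i.e.\ $u\le\overline{u}$, and exchanging $u$ and $\overline{u}$ gives $u=\overline{u}$ on $\overline{\Omega_T}$.

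The main obstacle, and the reason for the quartic penalization, is exactly the singular (discontinuous) behaviour of the operator at $\nabla u=0$: quadratic doubling only delivers $X_j\le Y_j$, which is insufficient to compare $(p-2)\lambda_{\max}(X_j)$ with $(p-2)\lambda_{\min}(Y_j)$ in the degenerate case. Penalizing with $|x-y|^4$ — more generally, with any modulus vanishing to second order at the diagonal, in the spirit of the admissible test functions of Ohnuma–Sato for singular parabolic equations — makes $A_j$, and therefore the pair $(X_j,-Y_j)$, vanish precisely when the doubled gradient degenerates, which is what makes the two cases collapse; the remaining ingredients (the penalization estimates, the theorem on sums, monotonicity of the operator in the Hessian variable, and the sign condition $p\ge2$) are entirely standard.
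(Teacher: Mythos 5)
Your proof is correct, and its skeleton coincides with the paper's: argue by contradiction from a positive interior maximum of the difference and run a doubling-of-variables argument with exactly the same quartic-in-space, quadratic-in-time penalization. The differences are in how the two ingredients specific to this problem are discharged. The paper keeps the obstacle dichotomy of Definition 4.6(ii) alive at the doubled maximum: either $u(x_j,t_j)=\psi(x_j,t_j)$, which it refutes directly using $\overline u\ge\psi$ and continuity, or $u>\psi$ there, in which case it does not carry out the PDE comparison itself but invokes Lemma 6.2 of Parviainen--Ruosteenoja. You instead dispose of the obstacle once and for all by noting that at the (perturbed) maximum point $u>\overline u\ge\psi$, so for large $j$ only the PDE branch of (ii) is ever triggered, and you then supply the comparison argument in full: parabolic theorem on sums, $X_j\le Y_j$ via vectors $(\xi,\xi)$ when $x_j\ne y_j$ together with monotonicity of $X\mapsto(p-2)\langle X\hat q,\hat q\rangle+\operatorname{tr}X$, and $A_j=0$, $p_j=0$, $X_j\le0\le Y_j$ in the degenerate case, which is precisely where $p\ge2$ and the quartic penalization are used. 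You also replace the paper's $-\eta s$ strictness term by $-\sigma/(T-t)$, which simultaneously quantifies strictness ($a_j-b_j=\sigma/(T-t_j)^2$) and keeps the maximum off the terminal face. What each buys: the paper's version is shorter but outsources the analytic core and tacitly relies on the compatibility of its Definition 4.6 with the formulation in the cited lemma; yours is self-contained and makes the role of each hypothesis explicit. Two harmless points you may wish to make explicit: your jet-based argument uses the standard equivalence between the test-function formulation of Definition 4.6 (with the $\lambda_{\min}/\lambda_{\max}$ branches, i.e.\ the semicontinuous envelopes of the operator) and closures of parabolic semijets; and the bound $M_\sigma\ge M-\sigma/(T-t^\ast)$ presumes $t^\ast<T$, which is remedied by picking, via continuity, a nearby point with $t<T$ where $u-\overline u>M/2$.
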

\begin{proof}
    We argue by contradiction. Suppose that $u$ and $v$ are two viscosity solutions of the obstacle problem (1.1) in the sense of Definition 4.6, and that their difference attains a positive maximum at some point $(x_0,t_0)\in\overline{\Omega_{T}}$:
    \begin{equation}
        u(x_0,t_0)-v(x_0,t_0)=\underset{\overline{\Omega_{T}}}{\sup}(u-v)>0.
    \end{equation}
    To compare $u$ and $v$, we apply the method of doubling of variables. Then for each $j\in\mathbb{N}$, we define the following penalized function
    \[
    \omega_{j}(x,t,y,s)=u(x,t)-v(y,s)-\phi_{j}(x,t,y,s)-\eta s,~0<\eta<<1,
    \]
    where
    \[
    \phi_{j}(x,t,y,s)=\frac{j^{2}}{4}|x-y|^{4}+j^{2}|t-s|^{2}.
    \]
    Given that $u$ and $v$ are continuous on the closure of the domain and $\phi_{j}$ is coercive, the function $\omega_{j}$ attains its global maximum at some point $(x_j,t_j,y_j,s_j)\in \overline{\Omega_{T}}^{2}$. Moreover, since $(x_0,t_0)$ is a local maximum for $u-v$, we may assume that
    \[
    (x_j,t_j,y_j,s_j)\longrightarrow (x_0,t_0,x_0,t_0)~~~~\text{as}~~j\rightarrow\infty,
    \]
    and consequently
    \[
    \phi_{j}(x_j,t_j,y_j,s_j)\longrightarrow0~~~~\text{as}~~j\rightarrow\infty.
    \]
    Next, for each $j$ we define
    \[
    \varphi_{u}(x,t)=v(y_j,s_j)+\phi_{j}(x,t,y_j,s_j),~~~~~~\varphi_{v}(y,s)=u(x_j,t_j)-\phi_{j}(x_j,t_j,y,s)-\eta s.
    \]
    Since $(x_j,t_j,y_j,s_j)$ is a global maximizer of $\omega_{j}(x,t,y,s)$, we have for $(x,t)$ near $(x_j,t_j)$ and $(y,s)$ near $(y_j,s_j)$ respectively that
    \[
    (u-\varphi_u)(x,t)\leq(u-\varphi_u)(x_j,t_j),~~~~(v-\varphi_v)(y,s)\geq(v-\varphi_v)(y_j,s_j).
    \]
    As a result, $\varphi_u$ is a smooth test function that touches $u$ from above at $(x_j,t_j)$ and $\varphi_{v}$ is a smooth test function that touches $v$ from below at $(y_j,s_j)$. Therefore,  since $\varphi_{u}$ satisfies the condition (ii) stated in Definition 4.6, we have either $u(x_j,t_j)=\psi(x_j,t_j)$ or (4.17).
For the first case, we have
    \[
    u(x_j,t_j)-v(y_j,s_j)=\psi(x_j,t_j)-v(y_j,s_j)\leq\psi(x_j,t_j)-\psi(y_j,s_j),
    \]
which implies, since $u$ and $v$ are continuous at $(x_j,t_j)\longrightarrow(x_0,t_0)$ as $j\rightarrow\infty$, that 
\[
u(x_0,t_0)-v(x_0,t_0)\leq 0,
\]
which contradict (4.28).

Next, when $u(x_j,t_j)>\psi(x_j,t_j)$ and $v\geq\psi$, the proof follows the same steps as in Lemma~6.2 of~\cite{parv} in the sense of Definition~4.6. Therefore, the desired uniqueness follows.
\end{proof}
\section*{Declarations}
This work was supported by the Carnegie Corporation of New York grant (provided through the AIMS Research and Innovation Center)

\end{document}